\DeclareMathOperator*{\argmin}{arg\,min}
\newcommand{\E}{{\mathbb{E}}}
\newcommand{\R}{{\mathbb{R}}}
\newtheorem{theorem}{Theorem}[section]
\newtheorem{lemma}[theorem]{Lemma}
\newtheorem{proposition}[theorem]{Proposition}
\newtheorem{assumption}[theorem]{Assumption}%
\newtheorem{definition}[theorem]{Definition}
\newtheorem{remark}[theorem]{Remark}
\title{A Note on Complexity for Two Classes of Structured Non-Smooth Non-Convex Compositional Optimization}
\author{Yao Yao\quad  Qihang Lin\quad Tianbao Yang}
\date{}
\begin{document}
\maketitle

\begin{abstract}
This note studies numerical methods for solving compositional optimization problems, where the inner function is smooth, and the outer function is Lipschitz continuous, non-smooth, and non-convex but exhibits one of two special structures that enable the design of efficient first-order methods. In the first structure, the outer function allows for an easily solvable proximal mapping. We demonstrate that, in this case, a smoothing compositional gradient method can find a $(\delta,\epsilon)$-stationary point—specifically defined for compositional optimization—in $\mathcal{O}(1/(\delta \epsilon^2))$ iterations. In the second structure, the outer function is expressed as a difference-of-convex function, where each convex component is simple enough to allow an efficiently solvable proximal linear subproblem. In this case, we show that a prox-linear method can find a nearly $\epsilon$-critical point in $\mathcal{O}(1/\epsilon^2)$ iterations.
\end{abstract}

\section{Introduction}
In this work, we consider the following \textit{compositional optimization problem}
\begin{equation}\label{eqn:prob1}
    \min_{x\in\R^d} f(x) := h(g(x)),
\end{equation}
where $h:\mathbb{R}^m\rightarrow \mathbb{R}$ is a Lipschitz continuous, non-smooth and non-convex function and $g:\R^d\rightarrow \R^m$ is a smooth map. Overall, (\ref{eqn:prob1}) is a non-smooth non-convex compositional optimization problem. 

This note focuses on first-order methods for \eqref{eqn:prob1} under some structural assumptions and analyzes their convergence. For general non-smooth non-convex optimization problems, both asymptotic convergence analysis, e.g., \cite{benaim2005, kiw07, majewski2018, Davis2020}, and non-asymptotic convergence analysis, e.g., \cite{kor22, zhang20}, have been established in literature for different first-order methods under various settings. Typically, non-asymptotic convergence analysis implies the iteration complexity of a first-order method for finding an $\epsilon$-stationary point. However, for general  non-smooth and non-convex optimization, it has been shown that a traditional $\epsilon$-stationary point cannot be obtained in finite complexity \cite{kor22, zhang20, tian2024no, jordan2023deterministic}. Therefore, recent studies have focused finding a \textit{$(\delta,\epsilon)$-stationary point}~\cite{zhang20,jordan2023deterministic} (also called a $(\delta,\epsilon)$-Goldstein stationary point). 

The first result of this note is a \emph{smoothing compositional gradient} method that finds a \textit{$(\delta,\epsilon)$-stationary} point for (\ref{eqn:prob1}) with iteration complexity $O(1/\delta \epsilon^2)$. The definition of such a $(\delta,\epsilon)$-stationary point (Definition \ref{def1}) is tailored for the compositional structure in (\ref{eqn:prob1}), and is similar to but different form that in \cite{zhang20,jordan2023deterministic}. 

In addition, we consider a special case of (\ref{eqn:prob1}), which is a difference-of-convex (DC) optimization with the following compositional structure.
\begin{equation}\label{eqn:prob2}
    \min_x f(x):= h_2(g_2(x))-h_1(g_1(x)),
\end{equation}
where $h_i:\mathbb{R}^m\rightarrow \mathbb{R}$ is a Lipschitz continuous convex function and $g_i:\R^d\rightarrow \R^m$ is a smooth map for $i=1,2$. To solve this problem, we propose a \emph{prox-linear} method where the solution is updated by solving two proximal linear subproblems. We show that, when the subproblems can be solved exactly, the method finds a nearly $\epsilon$-critical point of \eqref{eqn:prob2}, originally defined by \cite{Sun2022}, within $O(1/\epsilon^2)$ iterations.

\section{Related Works}


\textbf{Non-smooth non-convex problems.} General non-smooth non-convex problems and their associated algorithms have been systematically discussed in several monographs, including algorithms' asymptotic convergence \cite{benaim2005, majewski2018, Davis2020, bolte2020, Yang2024} and non-asymptotic convergence \cite{zhang20, kor22, alcantara2024,jordan2023deterministic}. Instead of considering a general problem, this note studies the algorithms for non-smooth non-convex optimization that exploit some special structure of problems. 

\textbf{Non-smooth compositional optimization.} One important structure is the compositional structure like the one in (\ref{eqn:prob1}). \cite{zhang2020optimal,wang2024alexr} consider the case when $h\circ g$ is convex while \cite{duchi2018, davis2019, Drusvyatskiy2019} assume convex $h$ and  smooth $g$, which ensures weak convexity of $h\circ g$. \cite{wang2024} consider a problem where a non-Lipschitz $\ell_p (0<p< 1)$ regularizer is added to the compositional function $h\circ g$ but still assume convexity of $h$ and smoothness of $g$. In a recent work \cite{hu23}, the authors consider the case when $h$ and $g$ are both weakly convex and $h$ is non-decreasing in each component, which also ensures weak convexity of $h\circ g$. On the contrary, we assume $h$ is a non-convex function which is not necessarily weakly convex, so (\ref{eqn:prob1}) is not necessarily weakly convex. \cite{pong2019} consider a similar setting to ours. Assuming $g$ is linear, they show the asymptotic convergence of a smoothing gradient method. This note considers nonlinear $g$ and provides a non-asymptotic convergence analysis of a similar algorithm. \cite{Bolte2018} considers the same problem as (\ref{eqn:prob1}) and they convert the compositional problem to a constrained optimization problem. To solve the equivalent constrained problem, they use Lagrangian-based method and establish algorithm's asymptotic convergence.

\textbf{Structured difference-of-convex problems.} The applications of DC optimization have arisen from statistical learning~\cite{nouiehed2019pervasiveness,ahn2017difference,cao2022unifying,liu2023risk,cui2018composite}, resource allocation~\cite{alvarado2014new}, and stochastic program~\cite{liu2020two}. The algorithms in~\cite{pang2017computing,pang2018decomposition} can be applied to find a stationary point stronger than the one considered in this note, but the convergence analysis in those works is asymptotic so the overall complexity for finding a (nearly) $\epsilon$-stationary is not provided in those works. Non-asymptotic convergence analysis for DC programs has been developed in many papers, including but not limited to \cite{yao22,Sun2022,moudaficomplete,Xu2018,tao1997convex,abbaszadehpeivasti2024rate,hu2024}. The difference-of-convex algorithm (DCA), e.g., the ones in \cite{tao1997convex,abbaszadehpeivasti2024rate,le2018convergence}, finds a nearly $\epsilon$-stationary point in $O(1/\epsilon^2)$ iterations, and can achieve a linear convergence when the Kurdyka-{\L}ojasiewicz (KL) property holds with certain exponents. However, the DCA requires exactly solving a non-trivial convex subproblem, which may not be easy in general. The smoothing methods based on Moreau envelopes~\cite{yao22,Sun2022,moudaficomplete} only need to solve a subproblem inexactly using an inner loop, and finds a nearly $\epsilon$-stationary point with $O(1/\epsilon^4)$ iteration complexity. 
DC optimization with a compositional structure like \eqref{eqn:prob2} have also been studied recently~\cite{thi2024,hu2024}. The authors of \cite{thi2024} proposed a method that linearizes $g_1(x)$ and $g_2(x)$ and then applies the DCA. The asymptotic convergence analysis is shown and they also consider DC compositional optimization problems with non-convex constraints. \cite{hu2024} studied a DC optimization problem formulated as 
$$
\min_x \left[ \max_{y \in Y} \varphi(x, y) - \max_{z \in Z} \psi(x, z) \right]
$$
where $\Phi(x) = \max_{y \in Y} \varphi(x, y)$ and $\Psi(x) = \max_{z \in Z} \psi(x, z)$ are weakly convex and $\varphi(x, y)$ and $\psi(x, z)$ are strongly concave in $y$ and $z$, respectively. They develop a single-loop algorithm for such a problem and show an $O(1/\epsilon^4)$ complexity for finding a nearly $\epsilon$-stationary point. If $h_1$ and $h_2$ in \eqref{eqn:prob2} are smooth so that their Fr\'{e}chet conjugates are strongly concave, (\ref{eqn:prob2}) becomes a special case of the problem above. In this note, we show that, when $g_1$ and $g_2$ are smooth and $h_1$ and $h_2$ are simple enough so the two proximal linear subproblems can be solved exactly, an analysis similar to \cite{hu2024} leads to $O(1/\epsilon^2)$ iteration complexity.

\textbf{Prox-linear method.} The prox-linear method~\cite{Drusvyatskiy2018, Drusvyatskiy2019, Drusvyatskiy2021, Lewis2016} is a well-studied method for a problem like~\eqref{eqn:prob1}.  Given a current iteration $x_k$, the prox-linear method generates the next iterate $x_{k+1}$ as an optimal or nearly optimal solution of 
\begin{equation*}
\min_{x} h(g(x_k)+\nabla g(x_k)(x-x_k))+\frac{1}{2t}\|x-x_k\|^2.
\end{equation*}
When $h$ is a finite-valued Lipschitz convex function and $g$ is a smooth mapping, this method finds an $\epsilon$-stationary point in $O(1/\epsilon^2)$ iteration, provided that the subproblem above is solved exactly. \cite{Drusvyatskiy2021} extends this algorithm by replacing the objective function in the subproblem above with a Taylor-like model.

\section{Notations and Preliminaries}
In this section, we present some notations and preliminaries. Throughout this work, a Euclidean space is considered and denoted by $\R^d$, with an inner product $\langle\cdot,\cdot\rangle$ and the induced norm $\|\cdot\|$. Let $B$ denote the unit Euclidean ball and $\text{conv}(\cdot)$ denote the convex hull. Given a linear map $A:\R^d\rightarrow \R^l$, the operator norm of $A$ is defined as $\|A\|_{\text{op}}:=\max_{\|u\|\leq 1}\|Au\|$. 
Given a set $S$ in $\R^d$, the distance of a point $x$ onto $S$ is given by
\begin{equation*}
    \text{dist}(x;S):= \inf_{y\in S}\|y-x\|.
\end{equation*}

For a non-convex function $f(x):\R^d\rightarrow \R$, let $\hat{\partial}f(x)$ denote the Fréchet subgradient and $\partial f(x)$ denote the limiting subgradient, i.e.,
\begin{equation*}
    \begin{split}
        \hat{\partial}f(\bar{x})&=\left\{ v\in\R^d:\lim_{x\rightarrow \bar{x}}\inf \frac{f(x)-f(\bar{x})-v^\top (x-\bar{x})}{\|x-\bar{x}\|}\geq 0  \right\},\\
        \partial f(\bar{x})&=\left\{ v\in\R^d:\exists x_k\xrightarrow{f} \bar{x},v_k\in \hat{\partial}f(\bar{x}), v_k\rightarrow v\right\},
    \end{split}
\end{equation*}
where the notation $x\xrightarrow{f} \bar{x}$ means that $x\rightarrow \bar{x}$ and $f(x)\rightarrow f(\bar{x})$. It is known that $\hat{\partial}f(\bar{x})\subset \partial f(x)$. 

For any proper function $f$ and any real number $\mu>0$, the \textit{Moreau envelope} and the \textit{proximal mapping} of $f$ are defined by
\begin{equation*}
    \begin{split}
        f_{\mu}(x) &:= \min_y \left\{ f(y)+\frac{1}{2\mu}\|y-x\|^2 \right\},\\
        \text{prox}_{\mu f}(x) & := \argmin_{y}\left\{ f(y)+\frac{1}{2\mu}\|y-x\|^2 \right\},
    \end{split}
\end{equation*}
respectively. It is not hard to see that
\begin{equation*}
   \min_{y} f(y)\leq  f_{\mu}(x) \leq f(x) \text{ for all } x.
\end{equation*}

\section{Outer Function with Easy  Mapping}\label{sec:comp}
In this section, we study \eqref{eqn:prob1} under the following assumptions.
\begin{assumption}
\label{asm:comp}
The following statements hold for \eqref{eqn:prob1}.
\begin{itemize}
    \item[1.] $h: \R^m\rightarrow \R$ is a $L_h$-Lipschitz continuous function:
    \begin{equation*}
        |h(x)-h(y)|\leq L_h\|x-y\|\quad \text{for all } x, y \in \R^m;
    \end{equation*}
    There exists a constant $\underline{h}$ such that $h(\cdot) \geq \underline{h}$ and $\text{prox}_{\mu h}(\cdot)$ can be solved easily.  
    \item[2. ] $g: \R^d\rightarrow \R^m$ is a $L_g$-Lipschitz continuous and $C^1$-smooth mapping with a $\beta$-Lipschitz continuous Jacobian map:
    \begin{equation*}
        \|\nabla g(x)-\nabla g(y)\|_{\text{op}}\leq \beta \|x-y\|\quad \text{for all } x, y \in \R^d;
    \end{equation*}
    There exists a constant $C_g$ such that $\|g(x)\|\leq C_g$ for all $x\in\R^d$.
\end{itemize}
\end{assumption}
To solve problem (\ref{eqn:prob1}), we first introduce the Moreau envelope of $h$ at $g(x)$:
\begin{equation*}
    h_{\mu}(g(x)):=\min_{y} \left\{h(y)+\frac{1}{2\mu}\|y-g(x)\|^2\right\},
\end{equation*}
and the proximal mapping of $h$ at $g(x)$:
\begin{equation*}
\text{prox}_{\mu h}(g(x)) := \text{arg}\min_{y} \left\{\frac{1}{2\mu}\|y-g(x)\|^2 + h(y)\right\},
\end{equation*}
where $\mu>0$. A nice property of the Moreau envelope function is that it can be written as a difference-of-convex (DC) function~\cite{pong2019,Xu2018}:
\begin{equation}\label{eqn:moreau}
    h_{\mu}(g(x))
     = \frac{1}{2\mu}\|g(x)\|^2-\underbrace{\max_{y}\left( \frac{1}{\mu}y^\top g(x)-\frac{1}{2\mu}\|y\|^2-h(y) \right)}_{G(x)},
\end{equation}
where $G(x)$ is a weakly convex function because it is the max of weakly convex functions of $x$. Also, if the assumptions above are satisfied, it is easy to verify that $\frac{1}{2\mu}\|g(x)\|^2$ is $\frac{L_g^2+C_g\beta}{\mu}$-smooth and hence $\frac{L_g^2+C_g\beta}{\mu}$-weakly convex. 
We also present following nice properties that are useful.
\begin{lemma}\label{lemma1}
Under Assumption~\ref{asm:comp}, it holds that
    \begin{eqnarray}
            &&\text{prox}_{\mu h}(g(x)) \subset \left( I + \mu \partial h \right)^{-1}(g(x))\ \text{for all } x,\label{eqn:lemma1_1}\\
            &&\forall v \in \text{prox}_{\mu h}(g(x)),\ \frac{1}{\mu}(g(x)-v)\in \partial h(v).\label{eqn:lemma1_2}
    \end{eqnarray}
\end{lemma}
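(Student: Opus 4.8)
The plan is to apply Fermat's rule at a minimizer of the proximal subproblem and then use the exact sum rule for the Fréchet subdifferential. Fix $x$ and take any $v\in\text{prox}_{\mu h}(g(x))$, so that $v$ is a global minimizer of $\phi(y):=h(y)+\frac{1}{2\mu}\|y-g(x)\|^2$. (This set is nonempty because $h$ is continuous and bounded below by $\underline h$, so $\phi$ is coercive and attains its infimum; hence the statements are not vacuous.) The first-order necessary condition for the unconstrained minimum gives $0\in\hat\partial\phi(v)$.

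Next I would decompose $\hat\partial\phi(v)$. The map $y\mapsto\frac{1}{2\mu}\|y-g(x)\|^2$ is $C^1$ with gradient $\frac{1}{\mu}(y-g(x))$, so by the sum rule for the Fréchet subdifferential under addition of a differentiable function, $\hat\partial\phi(v)=\hat\partial h(v)+\frac{1}{\mu}(v-g(x))$. Combining with $0\in\hat\partial\phi(v)$ yields $\frac{1}{\mu}(g(x)-v)\in\hat\partial h(v)$, and since $\hat\partial h(v)\subset\partial h(v)$ (recalled in the preliminaries), this is exactly \eqref{eqn:lemma1_2}. For \eqref{eqn:lemma1_1} I would simply rewrite this inclusion: $\frac{1}{\mu}(g(x)-v)\in\partial h(v)$ is equivalent to $g(x)\in v+\mu\partial h(v)=(I+\mu\partial h)(v)$, i.e. $v\in(I+\mu\partial h)^{-1}(g(x))$; since $v$ was arbitrary in $\text{prox}_{\mu h}(g(x))$, the set inclusion \eqref{eqn:lemma1_1} follows.

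The only point requiring care — a mild one — is the \emph{exact-equality} sum rule $\hat\partial(h+\psi)(v)=\hat\partial h(v)+\nabla\psi(v)$ when $\psi$ is smooth and $h$ is merely Lipschitz. This holds with no constraint qualification, because adding the differentiable $\psi$ perturbs the difference quotient defining $\hat\partial$ only by an $o(\|y-v\|)$ term; I would cite the standard variational-analysis reference rather than reprove it. Everything else is a direct manipulation of inclusions, so I do not anticipate any substantive obstacle.
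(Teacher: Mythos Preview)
Your proposal is correct and matches the paper's approach: the paper does not write out a proof but simply cites \cite[Theorem 10.1]{Rock98} (Fermat's rule $0\in\hat\partial\phi(v)$) for \eqref{eqn:lemma1_2} and \cite[Example 10.2]{Rock98} (the resolvent description of the proximal map) for \eqref{eqn:lemma1_1}, which is exactly the argument you unpack. The exact sum rule you flag is indeed the only nontrivial ingredient and is standard (e.g., Exercise~8.8 in the same reference), so there is no gap.
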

The proof of (\ref{eqn:lemma1_1}) can be found in \cite[Example 10.2]{Rock98} and the proof of (\ref{eqn:lemma1_2}) can be found in \cite[Theorem 10.1]{Rock98}.
\begin{lemma}\label{lemma2}
    If Assumption~\ref{asm:comp} holds, for any $v \in \text{prox}_{\mu h}(g(x))$,
    \begin{equation*}
        \|v\| \leq C_v,
    \end{equation*}
    where $C_v:= C_g +\sqrt{2\mu\left(\max_x h(g(x))-\underline{h}\right)}$.
\end{lemma}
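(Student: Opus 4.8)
The plan is to exploit the optimality of $v$ in the proximal subproblem by comparing it against the trivial feasible point $y=g(x)$. Since $v\in\text{prox}_{\mu h}(g(x))$ minimizes $y\mapsto \frac{1}{2\mu}\|y-g(x)\|^2+h(y)$, substituting $y=g(x)$ immediately gives
\[
\frac{1}{2\mu}\|v-g(x)\|^2 + h(v) \;\le\; \frac{1}{2\mu}\|g(x)-g(x)\|^2 + h(g(x)) \;=\; h(g(x)).
\]

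Next I would invoke the uniform lower bound $h(v)\ge \underline{h}$ from Assumption~\ref{asm:comp} to discard the $h(v)$ term, which yields
\[
\frac{1}{2\mu}\|v-g(x)\|^2 \;\le\; h(g(x)) - \underline{h} \;\le\; \max_x h(g(x)) - \underline{h},
\]
and hence $\|v-g(x)\| \le \sqrt{2\mu\big(\max_x h(g(x)) - \underline{h}\big)}$. Then the triangle inequality together with the boundedness $\|g(x)\|\le C_g$ from Assumption~\ref{asm:comp} gives $\|v\| \le \|g(x)\| + \|v-g(x)\| \le C_g + \sqrt{2\mu\big(\max_x h(g(x)) - \underline{h}\big)} = C_v$, as claimed.

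The only point worth a brief remark is that $C_v$ is indeed finite and well defined: since $\|g(x)\|\le C_g$ for all $x$ and $h$ is $L_h$-Lipschitz, $h(g(x))$ is bounded above (e.g.\ by $h(0)+L_hC_g$), so $\max_x h(g(x))$ exists as a finite supremum, and the expression under the square root is nonnegative because $h(\cdot)\ge\underline{h}$. There is no real obstacle in this argument; it is essentially a one-line consequence of proximal optimality, so the main care is just in stating the auxiliary finiteness observation cleanly.
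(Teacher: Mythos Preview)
Your proof is correct and follows essentially the same approach as the paper: compare $v$ against $y=g(x)$ in the proximal objective, use the lower bound $h\ge\underline{h}$ to isolate $\|v-g(x)\|$, take the supremum over $x$, and finish with the triangle inequality and $\|g(x)\|\le C_g$. Your closing remark on the finiteness of $\max_x h(g(x))$ matches the paper's justification as well.
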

\begin{proof}
    Since $h_{\mu}(g(x))\leq h(g(x))$ and $h$ is lower bounded by $\underline{h}$, we can get for any $v \in \text{prox}_{\mu h}(g(x))$
    \begin{equation*}
        \underline{h}+\frac{1}{2\mu}\|v-g(x)\|^2\leq h(v)+\frac{1}{2\mu}\|v-g(x)\|^2\leq h(g(x)).
    \end{equation*}
    By rearranging the above inequalities, we have
    \begin{equation*}
        \|v-g(x)\|^2 \leq 2\mu (h(g(x))-\underline{h})\leq 2\mu\left(\max_x h(g(x))-\underline{h}\right).
    \end{equation*}
    By the $L_h$-Lipschitz continuity of $h$ and the upper boundness of $\|g(x)\|$, we know that the $\max_x h(g(x))$ exists.
    Then by triangle inequality, we can get
    \begin{equation*}
    \begin{split}
        \|v\| &\leq \|g(x)\|+\sqrt{2\mu\left(\max_x h(g(x))-\underline{h}\right)}\\
        & \leq C_g +\sqrt{2\mu\left(\max_x h(g(x))-\underline{h}\right)}.
    \end{split}
    \end{equation*}
    The result follows.
\end{proof}

\subsection{Targeted solution}
Consider the problem (\ref{eqn:prob1}), the Fréchet subgradient of $f$ can be get following a chain-rule \cite[Theorem 10.6]{Rock98}.
\begin{lemma}[Chain rule]
\label{thm:chainrule}
For $f$ defined in (\ref{eqn:prob1}), at any $x$, one has
\begin{equation*}
   \nabla g(x)^\top\hat{\partial}h(g(x))\subset   \hat{\partial} f(x) \subset   \partial f(x) \subset \nabla g(x)^\top \partial h(g(x)).
\end{equation*}
\end{lemma}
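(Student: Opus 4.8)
The plan is to establish the three inclusions from left to right. The middle one, $\hat{\partial}f(x)\subseteq\partial f(x)$, needs no argument at all: it is the containment $\hat{\partial}f(\bar{x})\subseteq\partial f(\bar{x})$ recorded in the Preliminaries, valid for any function. The two flanking inclusions are the ``inner'' and ``outer'' halves of the chain rule for composition with a $C^1$ map, and only the last one carries real content.

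For the first inclusion $\nabla g(x)^\top\hat{\partial}h(g(x))\subseteq\hat{\partial}f(x)$ I would argue straight from the definition of the Fr\'echet subgradient. Fix $w\in\hat{\partial}h(g(x))$ and set $v:=\nabla g(x)^\top w$; writing $y:=g(x)$ and $z:=g(x')$, one has the algebraic identity
\[
f(x')-f(x)-\langle v,x'-x\rangle=\bigl[h(z)-h(y)-\langle w,z-y\rangle\bigr]+\bigl\langle w,\,g(x')-g(x)-\nabla g(x)(x'-x)\bigr\rangle .
\]
Given $\eta>0$: the Fr\'echet subgradient inequality for $h$ at $y$ bounds the first bracket below by $-\eta\|z-y\|$ once $z$ is close enough to $y$, and $L_g$-Lipschitz continuity of $g$ gives $\|z-y\|\le L_g\|x'-x\|$, so for $x'$ near $x$ the first bracket is $\ge-\eta L_g\|x'-x\|$; the $C^1$-smoothness of $g$ makes $g(x')-g(x)-\nabla g(x)(x'-x)=o(\|x'-x\|)$, so the second term is $\ge-\eta\|w\|\,\|x'-x\|$ for $x'$ near $x$. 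Hence $f(x')-f(x)-\langle v,x'-x\rangle\ge-\eta(L_g+\|w\|)\|x'-x\|$ for $x'$ near $x$; dividing by $\|x'-x\|$ and letting $\eta\downarrow0$ yields $\liminf_{x'\to x}\frac{f(x')-f(x)-\langle v,x'-x\rangle}{\|x'-x\|}\ge0$, i.e. $v\in\hat{\partial}f(x)$.

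The last inclusion $\partial f(x)\subseteq\nabla g(x)^\top\partial h(g(x))$ is the step I expect to be the main obstacle, and I would obtain it from the limiting chain rule of Rockafellar and Wets \cite[Theorem 10.6]{Rock98}, which gives exactly $\partial(h\circ g)(x)\subseteq\nabla g(x)^\top\partial h(g(x))$ for a $C^1$ inner map $g$ \emph{provided} the constraint qualification holds that the only $y\in\partial^{\infty}h(g(x))$ with $\nabla g(x)^\top y=0$ is $y=0$. The crux is to check this qualification, and the point is that Assumption~\ref{asm:comp} makes it automatic: since $h$ is (globally) Lipschitz continuous, its horizon (singular) subgradient set $\partial^{\infty}h(\cdot)$ is $\{0\}$ everywhere, so the implication is vacuous. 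It is worth emphasizing that the analogous reverse inclusion at the Fr\'echet level, $\hat{\partial}f(x)\subseteq\nabla g(x)^\top\hat{\partial}h(g(x))$, is \emph{false} in general (already for linear $g$ with non-surjective Jacobian and a smooth-looking but non-regular $h$), so one cannot simply pass the easy Fr\'echet direction through a termwise limit; a fully self-contained proof would have to reproduce the Ekeland/fuzzy-sum-rule argument underlying \cite[Theorem 10.6]{Rock98}, and for a note it is cleanest to invoke that theorem directly after the one-line check that the qualification is satisfied.
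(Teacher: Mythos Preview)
Your proposal is correct and follows essentially the same route as the paper: the paper does not give an independent argument but simply attributes the lemma to \cite[Theorem~10.6]{Rock98}, which is exactly the result you invoke for the outer inclusion after checking the horizon-subdifferential qualification via Lipschitzness of $h$. Your added direct verification of the inner Fr\'echet inclusion is a nice elaboration but is also contained in that same theorem, so there is no substantive difference in approach.
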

Ideally, we want to find a point $x$ such that $\min\left\{\|\xi\|\ |\  \xi\in \partial f(x)\right\}\leq \epsilon$. However, the existing results show that this cannot be done with finite complexity~\cite{kor22, zhang20, tian2024no, jordan2023deterministic}. Instead, one can find a $(\delta,\epsilon)$-Goldstein stationary point using first-order methods~\cite{zhang20,jordan2023deterministic} with iteration complexity $O(1/(\delta\epsilon^3))$.   Here, a $(\delta,\epsilon)$-Goldstein stationary point is a point $x$ satisfying 
\begin{equation}
\label{eq:Goldstein}
 \text{dist}(0, \partial f(x+\delta B))\leq \epsilon
\end{equation}
where $\partial f(x+\delta B):= \text{conv}\left(\bigcup_{y\in x+\delta B}\partial f(x)\right)$ is the Goldstein $\delta$-subdifferential \cite{Goldstein1977}.

According to the last inclusion in Lemma~\ref{thm:chainrule}, if we weaken a $(\delta,\epsilon)$-Goldstein stationary point in \eqref{eq:Goldstein} to a $(\delta,\epsilon)$-stationary point defined with respect to set $\nabla g(x)^\top \partial h(g(x))$, it is possible to find such a weakened stationary point more easily, i.e., with lower iteration complexity. To verify this, the point we want to find through this study is defined below.
\begin{definition}\label{def1}
     A point $x$ is called a (chain-rule) \textit{$(\delta,\epsilon)$-stationary} point for  (\ref{eqn:prob1}) if 
    \begin{equation}
        \text{dist}(0, \nabla g(x)^\top\partial h(g(x)+\delta B))\leq \epsilon,
    \end{equation}
    where $\partial h(g(x)+\delta B):= \text{conv}\left(\bigcup_{y\in g(x)+\delta B}\partial h(y)\right)$ is the Goldstein $\delta$-subdifferential, introduced in \cite{Goldstein1977}.
   A random variable $x$ is called a \textit{stochastic (chain-rule) $(\delta,\epsilon)$-stationary} point for (\ref{eqn:prob1}) if 
    \begin{equation}
        \E\left[\text{dist}(0, \nabla g(x)^\top\partial h(g(x)+\delta B))\right]\leq \epsilon.
    \end{equation}
\end{definition}

\begin{remark}
To interpret Definition~\ref{def1}, we write (\ref{eqn:prob1}) equivalently into a constrained problem 
\begin{equation}\label{eqn:prob1_constrained}
    \min_{x\in\R^d,v\in\mathbb{R}^m} h(v)\text{ s.t. }g(x)=v.
\end{equation}
According to the optimality conditions of this problem, we can say $(x,v)$ is a Karush-Kuhn-Tucker (KKT) point of \eqref{eqn:prob1_constrained} if there exists $\lambda\in\mathbb{R}^m$ such that $\lambda\in \partial h(v)$, $\nabla g(x)^\top \lambda=0$,  and $v=g(x)$. Suppose $x$ is a $(\delta,\epsilon)$-stationary point in Definition~\ref{def1} for (\ref{eqn:prob1}) with $\delta=O(\epsilon)$. There exist $v\in\mathbb{R}^m$ and $\lambda\in \partial h(v)$ such that $\|g(x)-v\|\leq \delta=O(\epsilon)$ and $\|\nabla g(x)^\top\lambda\|\leq \epsilon$, indicating that $(x,v)$ is an $\epsilon$-KKT point of \eqref{eqn:prob1_constrained}.
\end{remark}

\subsection{Algorithm and  convergence analysis}

To find a $(\delta,\epsilon)$-stationary point of \eqref{eqn:prob1}, we propose a smoothing compositional gradient method (SCGM) in Algorithm \ref{alg:spgm}. This algorithm is essentially a gradient method applied to minimize $h_{\mu}(g(x))$ in \eqref{eqn:moreau}, which is a smooth approximation of \eqref{eqn:prob1}. Since we have asssumed the proximal mapping $\text{prox}_{\mu h}(g(x))$ can be easily solved, we can get $v(g(x_k))\in \text{prox}_{\mu h}(g(x_k))$ for $k=1, \cdots, K$. Note that $v(g(x_k))$ may not be unique since $h(\cdot)$ is not necessarily weakly convex. If there are multiple possible $v(g(x_k))$, our algorithm can use any one of them. Recall (\ref{eqn:moreau}). At the $k$th iteration, after getting one proximal point $v(g(x_k))$, we can get
\begin{equation*}
    \frac{1}{\mu}\nabla g(x_k)^\top g(x_k)-\frac{1}{\mu}\nabla g(x_k)^\top v(g(x_k))\in \partial h_{\mu}(g(x_k)),
\end{equation*}
and then we apply subgradient update on $x$ to generate $x_{k+1}$.

\begin{algorithm}
\caption{Smoothing Compositional Gradient Method (SCGM)}\label{alg:spgm}
\begin{algorithmic}[1]
\State{\textbf{Initialization:} $x_1$, $K\in \mathbb{Z}_+$, $\mu>0, \gamma > 0$.}
\For{$k=1,\cdots,K$}
\State{Get $v(g(x_k))\in \text{prox}_{\mu h}(g(x_k))$}
\State{Let $F_k(x) = \left(\frac{1}{\mu}\nabla g(x_k)^\top g(x_k) - \frac{1}{\mu}\nabla g(x_k)^\top v(g(x_k))\right)^\top(x-x_k)+\frac{\gamma}{2}\|x-x_k\|^2$}
\State{$x_{k+1}= \argmin\limits_{x} F_k(x)$}
\EndFor
\textbf{end for}
\State{\textbf{return} $x_\tau$ with $\tau$ uniformly sampled from $\{1, \cdots, K\}$}
\end{algorithmic}
\end{algorithm}

The convergence property of Algorithm \ref{alg:spgm} is presented as follows.
\begin{theorem}
    Suppose Assumption~\ref{asm:comp} holds and $\mu=\frac{\delta}{2L_h}$, $\gamma=\frac{2C}{\mu}$, and $K= \frac{16L_hC\Delta}{\delta\epsilon^2}$ with $C:=L_g^2+\beta C_g+C_vL_g$ and $\Delta:=h(g(x_1))-\underline{h}$, then the output of the Algorithm \ref{alg:spgm} $x_{\tau}$ is a stochastic $(\delta,\epsilon)$-stationary point for (\ref{eqn:prob1}).
\end{theorem}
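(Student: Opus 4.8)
The plan is to regard Algorithm~\ref{alg:spgm} as inexact gradient descent on the smoothed surrogate $\Phi(x):=h_\mu(g(x))$ of~\eqref{eqn:prob1}---a difference-of-convex function by~\eqref{eqn:moreau}---and to show that the direction $g_k:=\tfrac1\mu\nabla g(x_k)^\top(g(x_k)-v(g(x_k)))$ used to form $F_k$ both certifies near-stationarity of $x_k$ and drives a sufficient decrease of $\Phi$. For the certification, set $v_k:=v(g(x_k))\in\text{prox}_{\mu h}(g(x_k))$ and $\lambda_k:=\tfrac1\mu(g(x_k)-v_k)$; by~\eqref{eqn:lemma1_2} we have $\lambda_k\in\partial h(v_k)$, and since $h$ is $L_h$-Lipschitz its limiting subgradients have norm at most $L_h$, so $\|v_k-g(x_k)\|=\mu\|\lambda_k\|\le\mu L_h=\delta/2<\delta$. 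Hence $v_k\in g(x_k)+\delta B$, so $\lambda_k\in\partial h(g(x_k)+\delta B)$ and therefore $g_k=\nabla g(x_k)^\top\lambda_k\in\nabla g(x_k)^\top\partial h(g(x_k)+\delta B)$; in particular $\text{dist}\big(0,\nabla g(x_k)^\top\partial h(g(x_k)+\delta B)\big)\le\|g_k\|$ for every $k$.

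Next I would establish, for every $k$, the descent inequality
\[
\Phi(x_{k+1})\le\Phi(x_k)+\langle g_k,\,x_{k+1}-x_k\rangle+\frac{C}{\mu}\,\|x_{k+1}-x_k\|^2 .
\]
Since $v_k$ is a prox point, $\Phi(x_k)=h(v_k)+\tfrac1{2\mu}\|v_k-g(x_k)\|^2$, while $\Phi(x_{k+1})\le h(v_k)+\tfrac1{2\mu}\|v_k-g(x_{k+1})\|^2$; subtracting and expanding the squared norms gives
\[
\Phi(x_{k+1})-\Phi(x_k)\le\tfrac1\mu\big\langle g(x_k)-v_k,\,g(x_{k+1})-g(x_k)\big\rangle+\tfrac1{2\mu}\|g(x_{k+1})-g(x_k)\|^2 .
\]
Writing $g(x_{k+1})-g(x_k)=\nabla g(x_k)(x_{k+1}-x_k)+r_k$ with $\|r_k\|\le\tfrac\beta2\|x_{k+1}-x_k\|^2$ (the $\beta$-Lipschitz Jacobian), the principal part of the inner product is exactly $\langle g_k,x_{k+1}-x_k\rangle$; the leftover $\tfrac1\mu\langle g(x_k)-v_k,r_k\rangle$ together with $\tfrac1{2\mu}\|g(x_{k+1})-g(x_k)\|^2\le\tfrac{L_g^2}{2\mu}\|x_{k+1}-x_k\|^2$, estimated using $\|g(x_k)\|\le C_g$ and $\|v_k\|\le C_v$ from Lemma~\ref{lemma2} (equivalently, using the $\tfrac{L_g^2+C_g\beta}{\mu}$-smoothness of $\tfrac1{2\mu}\|g(\cdot)\|^2$ and the weak convexity of $G$ in~\eqref{eqn:moreau}, for which $\tfrac1\mu\nabla g(x_k)^\top v_k$ is a subgradient at $x_k$), assemble into the displayed second-order term with $C=L_g^2+\beta C_g+C_vL_g$.

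Since $F_k$ is a strongly convex quadratic, its minimizer is $x_{k+1}=x_k-\tfrac1\gamma g_k$, and the descent inequality with $\gamma=\tfrac{2C}{\mu}$ yields $\Phi(x_{k+1})\le\Phi(x_k)-\tfrac1{2\gamma}\|g_k\|^2$. Summing over $k=1,\dots,K$ and using $\underline h\le\min_y h(y)\le\Phi(x_{K+1})$ together with $\Phi(x_1)=h_\mu(g(x_1))\le h(g(x_1))$,
\[
\frac1{2\gamma}\sum_{k=1}^{K}\|g_k\|^2\le\Phi(x_1)-\Phi(x_{K+1})\le h(g(x_1))-\underline h=\Delta ,
\]
so $\tfrac1K\sum_{k=1}^{K}\|g_k\|^2\le\tfrac{2\gamma\Delta}{K}=\tfrac{4C\Delta}{\mu K}$; substituting $\mu=\tfrac{\delta}{2L_h}$ and $K=\tfrac{16L_hC\Delta}{\delta\epsilon^2}$ bounds this by $\tfrac{\epsilon^2}{2}$. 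Finally, with $\tau$ uniform on $\{1,\dots,K\}$, the estimate from the first paragraph and the Cauchy--Schwarz inequality give
\[
\E\Big[\text{dist}\big(0,\nabla g(x_\tau)^\top\partial h(g(x_\tau)+\delta B)\big)\Big]\le\E\|g_\tau\|\le\Big(\tfrac1K\sum_{k=1}^{K}\|g_k\|^2\Big)^{1/2}\le\epsilon ,
\]
which is exactly the claimed stochastic $(\delta,\epsilon)$-stationarity of $x_\tau$.

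The main obstacle will be the descent inequality in the second paragraph: because $h$ is merely Lipschitz and not weakly convex, $\Phi=h_\mu\circ g$ need not have a Lipschitz gradient---indeed $h_\mu$ need not even be differentiable---so this inequality cannot be invoked as a black-box descent lemma and must be extracted by hand from the prox-optimality of $v_k$ and the difference-of-convex structure~\eqref{eqn:moreau}, the accounting of how $\beta$, $L_g$, $C_g$ and the prox bound $C_v$ of Lemma~\ref{lemma2} combine into the constant $C$ being the part that requires care. The remaining steps, and the passage to the randomly chosen iterate, are routine.
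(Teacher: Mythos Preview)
Your proposal is correct and follows essentially the same route as the paper: both view Algorithm~\ref{alg:spgm} as (sub)gradient descent on $h_\mu\circ g$, establish a quadratic upper model at $x_k$ with the direction $g_k$, telescope, and combine with the bound $\|v_k-g(x_k)\|=O(\mu L_h)$ to conclude.

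Two minor remarks. First, the paper obtains the descent inequality with coefficient $\tfrac{C}{2\mu}$ (not $\tfrac{C}{\mu}$) purely through the DC decomposition~\eqref{eqn:moreau}: the $\tfrac{L_g^2+C_g\beta}{\mu}$-smoothness of $\tfrac1{2\mu}\|g(\cdot)\|^2$ gives~\eqref{eqn:g^2}, and the $\tfrac{C_vL_g}{\mu}$-weak convexity of $G$ (with subgradient $\tfrac1\mu\nabla g(x_k)^\top v_k$ at $x_k$) gives~\eqref{eqn:G}; adding these yields exactly the stated $C=L_g^2+\beta C_g+C_vL_g$. Your alternative ``direct'' estimate via $\|g(x_k)-v_k\|\le C_g+C_v$ produces the constant $L_g^2+\beta(C_g+C_v)$ instead, so to match the theorem's specific $C$ you must use the DC route you mention parenthetically. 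Second, for the prox-radius bound the paper argues $\tfrac1{2\mu}\|v_k-g(x_k)\|^2\le h(g(x_k))-h(v_k)\le L_h\|v_k-g(x_k)\|$, giving $\|v_k-g(x_k)\|\le 2\mu L_h=\delta$; your argument via the Lipschitz bound on limiting subgradients gives the sharper $\mu L_h$, which is of course fine.
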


\begin{proof}
Since $\frac{1}{2\mu}\|g(x)\|^2$ is $\frac{L_g^2+\beta C_g}{\mu}$-smooth, we have for any $u\in \mathbb{R}^d$,
\begin{equation}\label{eqn:g^2}
    \frac{1}{2\mu}\|g(u)\|^2\leq \frac{1}{2\mu}\|g(x_k)\|^2+\left\langle\frac{1}{\mu}\nabla g(x_k)^\top g(x_k), u-x_k\right\rangle+\frac{L_g^2+\beta C_g}{2\mu}\|u-x_k\|^2.
\end{equation}
By the Lemma \ref{lemma2}, we observe that $G(x)$ in (\ref{eqn:moreau}) is $\frac{C_vL_g}{\mu}$-weakly convex with $C_v$ defined in Lemma \ref{lemma2}. Then we have for any $u\in\mathbb{R}^d$,
\begin{equation}\label{eqn:G}
    -G(u) \leq -G(x_k)-\left\langle\frac{1}{\mu}\nabla g(x_k)^\top v(g(x_k)), u-x_k\right\rangle+\frac{C_vL_g}{2\mu}\|u-x_k\|^2.
\end{equation}
Combine (\ref{eqn:g^2}) and (\ref{eqn:G}) to get
\begin{equation*}
    h_{\mu}(g(u))\leq h_{\mu}(g(x_k))+\left\langle \frac{1}{\mu}\nabla g(x_k)^\top (g(x_k)-v(g(x_k))),u-x_k\right\rangle+\frac{L_g^2+\beta C_g+C_vL_g}{2\mu}\|u-x_k\|^2.
\end{equation*}
Let $C:=L_g^2+\beta C_g+C_vL_g$ and replace $u$ with $x_{k+1}$ in above inequality, we have
\begin{equation}\label{eqn:h1}
    h_{\mu}(g(x_{k+1}))\leq h_{\mu}(g(x_k))+\left\langle \frac{1}{\mu}\nabla g(x_k)^\top (g(x_k)-v(g(x_k))),x_{k+1}-x_k\right\rangle+\frac{C}{2\mu}\|x_{k+1}-x_k\|^2.
\end{equation}
Since $x_{k+1}=\argmin_{x} F_k(x)$, we have $F_k(x_{k+1})\leq F_k(x_k)$, i.e.,
\begin{equation}\label{eqn:h2}
    \left\langle \frac{1}{\mu}\nabla g(x_k)^\top (g(x_k)-v(g(x_k))),x_{k+1}-x_k\right\rangle+\frac{\gamma}{2}\|x_{k+1}-x_k\|^2 \leq 0
\end{equation}
and
\begin{equation}\label{eqn:x_{k+1}_2}
   \frac{1}{\mu}\nabla g(x_k)^\top g(x_k) - \frac{1}{\mu}\nabla g(x_k)^\top v(g(x_k))+ \gamma(x_{k+1}-x_k) = 0.
\end{equation}
By adding (\ref{eqn:h2}) to (\ref{eqn:h1}), we have
\begin{equation}\label{eqn:x_{k+1}_1}
    h_{\mu}(g(x_{k+1}))\leq h_{\mu}(g(x_k)) + \left(\frac{C}{2\mu}-\frac{\gamma}{2}\right)\|x_{k+1}-x_k\|^2.
\end{equation}
By letting $\gamma = \frac{2C}{\mu}$, (\ref{eqn:x_{k+1}_1}) can be easily converted to
\begin{equation*}
    \frac{C}{2\mu}\|x_{k+1}-x_{k}\|^2\leq h_{\mu}(g(x_k)) - h_{\mu}(g(x_{k+1})).
\end{equation*}
Rearrange (\ref{eqn:x_{k+1}_2}) and combine with the above formula, we can get 
\begin{equation*}
\begin{split}
    \frac{1}{\mu^2}\|\nabla g(x_k)^\top g(x_k) - \nabla g(x_k)^\top v(g(x_k))\|^2=&\gamma^2\|x_{k+1}-x_k\|^2\\
    \leq & 4\gamma (h_{\mu}(g(x_k)) - h_{\mu}(g(x_{k+1})))
\end{split}    
\end{equation*}
By taking average over $1,\cdots,K$ for above inequality, we have
\begin{equation*}
\begin{split}
    \frac{1}{K}\sum^K_{k=1}\frac{1}{\mu^2}\|\nabla g(x_k)^\top g(x_k) - \nabla g(x_k)^\top v(g(x_k))\|^2 &\leq 4\gamma \frac{h_{\mu}(g(x_1)) - h_{\mu}(g(x_{K}))}{K}\\
    &\leq 4\gamma \frac{h_{\mu}(g(x_1)) - \inf_x h_{\mu}(g(x))}{K}\\
    & \leq \frac{4\Delta \gamma}{K},
\end{split}
\end{equation*}
where $\Delta := h(g(x_1))-\underline{h}$. Here, we have used the fact that $h_{\mu}(g(x_1)) - \inf_x h_{\mu}(g(x))\leq \Delta$. Indeed, by the definition of $h_{\mu}(g(x))$, we have $\inf_x h_{\mu}(g(x))\geq \min_x h(x)\geq \underline{h}$ and $h_{\mu}(g(x_1))\leq h(g(x_1))$. Hence, we have
\begin{equation*}
    h_{\mu}(g(x_1)) - \inf_x h_{\mu}(g(x)) \leq h_{\mu}(g(x_1)) - \underline{h}\leq h(g(x_1))- \underline{h}=\Delta.
\end{equation*}
Then by the definition of $x_{\tau}$, we have
\begin{equation*}
    \mathbb{E}\left[ \frac{1}{\mu^2}\left\|\nabla g(x_{\tau})^\top\left(g(x_{\tau})- v(g(x_{\tau}))\right)\right\|^2 \right] \leq \frac{4\Delta \gamma}{K}.
\end{equation*}
Next, we want to bound the distance between $v(g(x_{\tau}))$ and $g(x_{\tau})$. By the definition of $v(g(x_{\tau}))$, we have
\begin{equation*}
     h(v(g(x_{\tau}))+\frac{1}{2\mu}\|v(g(x_{\tau}))-g(x_{\tau})\|^2\leq h(g(x_{\tau})),
\end{equation*}
from which we can get
\begin{equation*}
    \frac{1}{2\mu}\|v(g(x_{\tau}))-g(x_{\tau})\|^2\leq h(g(x_{\tau}))-h(v(g(x_{\tau})))\leq L_h \|g(x_{\tau})-v(g(x_{\tau}))\|,
\end{equation*}
where the last inequality comes from the $L_h$-Lipschitz continuity of $h$. From the above inequality, we can get
\begin{equation*}
    \|g(x_{\tau})-v(g(x_{\tau}))\| \leq 2\mu L_h.
\end{equation*}
By setting $\mu = \frac{\delta}{2L_h}$ and $K = \frac{16L_hC\Delta}{\delta \epsilon^2}$, we can get
\begin{equation*}
\begin{split}
    \mathbb{E}\left[ \frac{1}{\mu}\left\|\nabla g(x_{\tau})^\top\left(g(x_{\tau})- v(g(x_{\tau}))\right)\right\| \right] &\leq \epsilon,\\
    \|g(x_{\tau})-v(g(x_{\tau}))\| &\leq \delta.
\end{split}
\end{equation*}
By combining the above results with the Lemma (\ref{lemma1}), we can get
\begin{equation*}
    \E\left[ \text{dist}(0, \nabla g(x_{\tau})^\top \partial h(v(g(x_{\tau}))) \right] \leq \epsilon
\end{equation*}
with $v(g(x_{\tau}))\in g(x_{\tau})+\delta B$, which shows that $x_{\tau}$ is a stochastic $(\delta,\epsilon)$-stationary point for the problem (\ref{eqn:prob1}).
\end{proof}

This result shows that the iteration complexity of finding a (chain-rule) $(\delta,\epsilon)$-stationary point is lower than that of $(\delta,\epsilon)$-Goldstein stationary point by a factor of $O(1/\epsilon)$.

\section{Difference-of-Convex Outer Function}
In this section, we consider \eqref{eqn:prob2}, which is a special case of \eqref{eqn:prob1} called compositional DC optimization. We define $f_1(x):= h_1(g_1(x))$ and $f_1(x):= h_2(g_2(x))$ so \eqref{eqn:prob2} can be rewritten as
\begin{equation*}
\begin{split}
    \min_x f(x) & := f_2(x)-f_1(x)= h_2(g_2(x))-h_1(g_1(x)).
\end{split}
\end{equation*}
Throughout this section, we make the following assumptions on \eqref{eqn:prob2}.
\begin{assumption}
\label{asm:dc}
The following statements hold for \eqref{eqn:prob2}.
\begin{itemize}
    \item[1.] $h_i:\R^m\rightarrow \mathbb{R}$ is a convex and $L$-Lipschitz continuous function 
    and $\text{prox}_{\mu h_i(Ax+b)}(\cdot)$ can be solved easily for $i=1,2$ and for any $A\in\mathbb{R}^{m\times d}$ and any $b\in\mathbb{R}^m$.  
    \item[2.] $g_i: \mathbb{R}^d\rightarrow \mathbb{R}^m$ is a $C^1$-smooth mapping with a $\beta$-Lipschitz continuous Jacobian map for $i=1,2$.  There exists a constant $C_g$ such that $\max\{\|g_1(x)\|, \|g_2(x)\|\}\leq C_g$ for all $x\in\R^d$.
\end{itemize}
\end{assumption}
\begin{lemma}[Lemma 4.2 in \cite{Drusvyatskiy2019}]
\label{lemma:weak_con}
Under Assumption~\ref{asm:dc}, $f_1$ and $f_2$ are $\rho$-weakly convex with 
$$
\rho:= L\beta.
$$
\end{lemma}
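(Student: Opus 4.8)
The plan is to prove the equivalent statement that, for each $i\in\{1,2\}$, the function $\phi_i:=f_i+\tfrac{\rho}{2}\|\cdot\|^2$ is convex, and to do so by exhibiting at every point a global affine minorant of $\phi_i$ that touches it there; since a function equal to the pointwise supremum of a family of affine functions is convex, this suffices. Concretely, fix $x\in\R^d$ and pick any $\zeta\in\partial h_i(g_i(x))$ (nonempty, as $h_i$ is finite-valued and convex). I aim to show that for all $y\in\R^d$,
\[
 f_i(y)\;\ge\; f_i(x)+\big\langle \nabla g_i(x)^\top\zeta,\,y-x\big\rangle-\tfrac{L\beta}{2}\|y-x\|^2 .
\]
Adding $\tfrac{\rho}{2}\|y\|^2$ to both sides and rearranging (using $\rho=L\beta$) turns this into $\phi_i(y)\ge \phi_i(x)+\langle \nabla g_i(x)^\top\zeta+\rho x,\,y-x\rangle$, i.e.\ the desired affine minorant of $\phi_i$ with contact point $x$.

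To establish the displayed inequality I would combine three ingredients. First, the $\beta$-Lipschitz continuity of $\nabla g_i$ gives the quadratic Taylor-remainder bound $\|g_i(y)-g_i(x)-\nabla g_i(x)(y-x)\|\le\tfrac{\beta}{2}\|y-x\|^2$, obtained by writing $g_i(y)-g_i(x)=\int_0^1\nabla g_i(x+t(y-x))(y-x)\,dt$ and estimating the integrand. Second, the $L$-Lipschitz continuity of the convex function $h_i$ forces $\|\zeta\|\le L$ for every element $\zeta$ of $\partial h_i$. Third, convexity of $h_i$ yields the subgradient inequality $h_i(z)\ge h_i(g_i(x))+\langle\zeta,z-g_i(x)\rangle$ for all $z$. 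Instantiating the third at $z=g_i(y)$, splitting $g_i(y)-g_i(x)$ into its linearization $\nabla g_i(x)(y-x)$ plus the remainder, and controlling the remainder term via Cauchy--Schwarz together with the first two bounds produces exactly the displayed inequality.

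Two remarks close the argument. The quantity $\nabla g_i(x)^\top\zeta$ is a genuine (limiting, indeed Fréchet) subgradient of $f_i$ at $x$ by the chain rule for the composition of a convex function with a smooth map, so the inequality is of weak-convexity type as claimed; running the construction over all $x\in\R^d$ and both $i$ then gives $\phi_i$ convex, i.e.\ $f_i$ is $\rho$-weakly convex with $\rho=L\beta$. The only mildly delicate point is the opening reduction — that a pointwise affine-minorant property is equivalent to convexity — but this is immediate because each $\phi_i$ is continuous (locally Lipschitz) and coincides with the supremum of the affine minorants constructed above; everything else is the routine Taylor estimate. Alternatively, the statement is a restatement of Lemma~4.2 in \cite{Drusvyatskiy2019} and may simply be cited.
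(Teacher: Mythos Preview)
Your proof is correct and is precisely the standard argument (the one behind Lemma~4.2 of \cite{Drusvyatskiy2019}): combine the subgradient inequality for the convex outer function $h_i$, the quadratic Taylor remainder $\|g_i(y)-g_i(x)-\nabla g_i(x)(y-x)\|\le\tfrac{\beta}{2}\|y-x\|^2$, and the bound $\|\zeta\|\le L$ to obtain $f_i(y)\ge f_i(x)+\langle\nabla g_i(x)^\top\zeta,y-x\rangle-\tfrac{L\beta}{2}\|y-x\|^2$, which is exactly $\rho$-weak convexity. The paper itself does not give a proof at all---it simply cites the reference---so your write-up is strictly more detailed than what appears here; the final sentence of your proposal (``may simply be cited'') is in fact all the paper does.
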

By the Lemma \ref{lemma:weak_con}, problem \eqref{eqn:prob2} is DC optimization where the two convex components can be $f_2(x)+\frac{1}{2\mu}\|x\|^2$ and $f_1(x)+\frac{1}{2\mu}\|x\|^2$ for any $\mu\in (0,\rho^{-1})$.

\subsection{Targeted solution}
We say $x\in\R^d$ is a \textit{critical point} of (\ref{eqn:prob2}) if $0\in \partial f_2(x)-\partial f_1(x)$. Given $\epsilon > 0$, we say $x\in\R^d$ is an \textit{$\epsilon$-critical point} of (\ref{eqn:prob2}) if there exist $\xi_1\in \partial f_1(x)$ and $\xi_2 \in \partial f_2(x)$ such that $\|\xi_1-\xi_2\|\leq \epsilon$. Since computing an $\epsilon$-critical point is difficult, following recent literature \cite{Sun2022,yao22,hu2024},  we instead consider finding a \emph{nearly $\epsilon$-critical point} defined below.
\begin{definition}\label{def2}
   Given $\epsilon>0$, we say $x\in\R^d$ is a nearly $\epsilon$-critical point of (\ref{eqn:prob2}) if there exist $\xi_1$, $\xi_2$, $x'$, and $x'' \in \R^d$ such that $\xi_1\in \partial f_1(x')$, $\xi_2 \in \partial f_2(x'')$ and $\max\{\|\xi_1-\xi_2\|, \|x-x'\|, \|x-x''\|\}\leq \epsilon$.    A random variable $x$ is called a \textit{stochastic nearly $\epsilon$-critical} point for (\ref{eqn:prob2}) if there exist random variables $\xi_1$, $\xi_2$, $x'$, and $x'' \in \R^d$ such that $\xi_1\in \partial f_1(x')$, $\xi_2 \in \partial f_2(x'')$ and $\max\{\E\|\xi_1-\xi_2\|, \E\|x-x'\|, \E\|x-x''\|\}\leq \epsilon$. 
\end{definition}
\begin{remark}
To interpret Definition~\ref{def2}, we write (\ref{eqn:prob2}) equivalently into a constrained problem 
\begin{equation}\label{eqn:prob2_constrained}
    \min_{x_1,x_2\in\R^d} f_2(x_2)-f_1(x_1)\text{ s.t. }x_1=x_2.
\end{equation}
This is still DC optimization as it can be represented as $[f_2(x_2)+\mathbf{1}_{x_1=x_2}(x_1,x_2)] - f_1(x_1)$, where $\mathbf{1}_{x_1=x_2}(x_1,x_2)$ is an indicator function that equals zero if $x_1=x_2$ and equals positive infinity if $x_1\neq x_2$. We then can say $(x_1,x_2)$ is a critical point of \eqref{eqn:prob2_constrained} if $x_1=x_2$ and there exists $\lambda \in\mathbb{R}^m$  such that 
$$
\partial [f_2(x_2)+\mathbf{1}_{x_1=x_2}(x_1,x_2)]\cap \partial f_1(x_1) = \{\lambda\}\times (\partial f_2(x_2)-\lambda)\cap f_1(x_1)\times\{0\}\neq\emptyset.$$
These conditions can be stated as $\lambda\in \partial f_2(x_2)$, $\lambda\in\partial f_1(x_1)$ and $x_2=x_1$, which can be viewed as KKT conditions for \eqref{eqn:prob2_constrained} with  $\lambda$ being  a Lagrangian multiplier. Then, suppose $x$ is a nearly  $\epsilon$-critical point with $\xi_1$, $\xi_2$, $x'$, and $x'' \in \R^d$ defined in Definition~\ref{def2}. We then have $\xi_2\in \partial f_2(x'')$, $\text{dist}(\xi_2,\partial f_1(x'))\leq \|\xi_2-\xi_1\|\leq \epsilon$, and $\|x'-x''\|\leq \|x'-x\|+ \|x-x''\|\leq 2\epsilon$, suggesting that $(x',x'')$ is a $2\epsilon$-KKT point of \eqref{eqn:prob2_constrained} and thus $(x,x)$ can be called a nearly  $2\epsilon$-KKT point of \eqref{eqn:prob2_constrained} in the sense that $\|(x,x)-(x',x'')\|\leq 2\epsilon$.
\end{remark}

Following \cite{Sun2022,yao22,hu2024}, to solve (\ref{eqn:prob2}), we approximate $f(x)$ by a smooth function using the Moreau envelopes of $f_1$ and $f_2$, which are defined as 
\begin{equation}
\label{eq:envelopes12}
    f_{1,\mu} (z) := \min_x\left\{ h_{1}(g_1 (x)) + \frac{1}{2\mu}\|x-z\|^2 \right\}\quad\text{and}\quad
    f_{2,\mu} (z) := \min_x\left\{ h_{2}(g_2 (x)) + \frac{1}{2\mu}\|x-z\|^2 \right\}
\end{equation}
where $\mu\in (0,\rho^{-1})$. The proximal mappings of $f_1$ and $f_2$ are defined as
\begin{equation}
\label{eq:proximal12}
    x_1^*(z) := \argmin_x\left\{ h_{1}(g_1 (x)) + \frac{1}{2\mu}\|x-z\|^2 \right\}\quad\text{and}\quad
    x_2^*(z) := \argmin_x\left\{ h_{2}(g_2 (x)) + \frac{1}{2\mu}\|x-z\|^2 \right\}.
\end{equation}
Note that  $x_1^*(z)$ and $x_2^*(z)$ are unique because the minimization problems above are strongly convex. It is well known that $f_{i,\mu}(x)$ is smooth with $\nabla f_{i,\mu}(z) = \mu^{-1}(z-x_i^*(z))$ \cite[Proposition 13.37]{Rock98}. Moreover, $x_i^*(z)$ is $(1-\mu\rho)^{-1}$-Lipschitz continuous by the following lemma.
\begin{lemma}[Lemma 3.5 in \cite{Zhang2020}]\label{lemma_prox}
    For any $z, z'\in \mathbb{R}^d$, we have
    \begin{equation*}\label{eqn:lemma}
        \|x_i^*(z)-x_i^*(z')\|\leq (1-\mu\rho)^{-1} \|z-z'\|,\quad i=1,2.
    \end{equation*}
\end{lemma}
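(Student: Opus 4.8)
The plan is to derive the bound from the first-order optimality conditions of the two (strongly convex) proximal subproblems, combined with the hypomonotonicity of the subdifferential of a weakly convex function.

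First I would observe that, by Lemma~\ref{lemma:weak_con}, $f_i$ is $\rho$-weakly convex, and since $\mu\in(0,\rho^{-1})$ the function $x\mapsto f_i(x)+\tfrac{1}{2\mu}\|x-z\|^2$ is $(\mu^{-1}-\rho)$-strongly convex; hence $x_i^*(z)$ is the unique minimizer and satisfies the optimality condition
\[
\frac{1}{\mu}\bigl(z-x_i^*(z)\bigr)\in\partial f_i\bigl(x_i^*(z)\bigr).
\]
Next I would record the hypomonotonicity of $\partial f_i$: since $f_i+\tfrac{\rho}{2}\|\cdot\|^2$ is convex, its subdifferential $\partial f_i(\cdot)+\rho(\cdot)$ is monotone, so for any $u,u'$ and any $v\in\partial f_i(u)$, $v'\in\partial f_i(u')$ one has $\langle v-v',u-u'\rangle\geq-\rho\|u-u'\|^2$.

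Then I would set $u=x_i^*(z)$, $u'=x_i^*(z')$, $v=\mu^{-1}(z-u)$, $v'=\mu^{-1}(z'-u')$, substitute into the hypomonotonicity inequality, and expand and rearrange to get
\[
\frac{1}{\mu}\langle z-z',u-u'\rangle\geq\Bigl(\frac{1}{\mu}-\rho\Bigr)\|u-u'\|^2.
\]
Applying Cauchy--Schwarz to the left-hand side and dividing through by $\|u-u'\|$ (the case $u=u'$ being trivial) yields $\|u-u'\|\leq(1-\mu\rho)^{-1}\|z-z'\|$, which is the claim.

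There is no real obstacle here—this is a standard computation—so the only things to be careful about are the sign bookkeeping when expanding the inner product and the fact that $1-\mu\rho>0$ (equivalently $\mu<\rho^{-1}$) is exactly what makes the final division legitimate and the Lipschitz constant finite. As an alternative one could complete the square to rewrite $x_i^*(z)=\operatorname{prox}_{\nu\tilde f_i}\bigl(z/(1-\mu\rho)\bigr)$ with $\tilde f_i:=f_i+\tfrac{\rho}{2}\|\cdot\|^2$ convex and $\nu:=\mu/(1-\mu\rho)$, and then invoke nonexpansiveness of the proximal operator of a convex function to get the bound in one line; I would mention this, but present the monotonicity argument as the main proof since it is self-contained.
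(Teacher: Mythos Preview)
Your argument is correct: the optimality condition combined with hypomonotonicity of $\partial f_i$ and Cauchy--Schwarz yields the bound exactly as you outline, and the alternative via completing the square is also valid. The paper does not actually prove this lemma at all---it simply quotes it as Lemma~3.5 of \cite{Zhang2020}---so there is nothing to compare against; your self-contained proof is a welcome addition.
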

Hence, using the Moreau envelopes, we can construct a smooth approximation of (\ref{eqn:prob2}) as follows
\begin{equation}\label{eqn:smooth_prob2}
    \min_z \left\{ f_{\mu}(z):= f_{2,\mu}(z)- f_{1,\mu}(z)\right\}.
\end{equation}
By the aforementioned properties of $f_{1,\mu}$ and $f_{2,\mu}$, function $f_{\mu}$ is smooth with $\nabla f_{\mu}(z)=\mu^{-1}(x_1^*(z)-x_2^*(z))$ and $\nabla f_{\mu}$ is $L_{\mu}$-Lipschitz continuous with $L_{\mu}=\frac{2}{\mu-\mu^2\rho}$.  

\subsection{Algorithm and convergence analysis}
After approximating (\ref{eqn:prob2}) by \eqref{eqn:smooth_prob2}, we can directly apply a first-order method for smooth non-convex optimization to (\ref{eqn:smooth_prob2}). Suppose the solution for (\ref{eqn:smooth_prob2}) at iteration $k$ is $z^k$. To compute $\nabla f_{\mu}(z^k)$, we need to compute $x_1^*(z^k)$ and $x_2^*(z^k)$ exactly, which is not easy. Suppose, during the last iteration, we have obtained some approximations to $x_i^*(z^{k-1})$, denoted by $x_1^k$, for $i=1$ and $2$. Then we adopt the key idea of the single-loop prox-linear approximate gradient method (PAGM) where $g_i(x)$ and term $\frac{1}{2\mu}\|x-z^k\|^2$ in \eqref{eq:envelopes12} are linearized at $x=x_i^k$ for $i=1$ and $2$, and the following prox-linear subproblems are solved.
\begin{equation}
x_i^{k+1}=\argmin\limits_{x} \left\{h_i(g_i(x^k_i)+\nabla g_i(x^k_i)^\top (x-x_i^k))+\frac{1}{\mu}\langle x_i^k-z^k, x-x_i^k\rangle+\frac{1}{2t}\|x-x_i^k\|^2\right\},\quad i=1,2,
\end{equation}
where $t>0$ is a step size. Note that $x_i^{k+1}$ can be obtained easily thanks to Assumption~\ref{asm:dc}. Then  $x_1^{k+1}$ and $x_2^{k+1}$ are used as approximations to $x_1^*(z^k)$ and $x_2^*(z^k)$, respectively, so we obtain an approximation of $\nabla f_{\mu}(z^k)=\mu^{-1}(x_1^*(z^k)-x_2^*(z^k))$ as $\mu^{-1}(x_1^{k+1}-x_2^{k+1})$. Then $z^k$ can be updated to $z^{k+1}$ through a gradient descend step along the direction of this approximate gradient, that is, $z^{k+1} = z^k - \gamma\mu^{-1}(x_1^{k+1}-x_2^{k+1})$ where $\gamma>0$ is a step size. This procedure is formally stated in Algorithm \ref{alg:plm}.

\begin{algorithm}
\caption{Prox-Linear Approximate Gradient Method (PAGM)}\label{alg:plm}
\begin{algorithmic}[1]
\State{\textbf{Initialization:} $x_1^1$, $x_2^1$, $z^1$, $K\in \mathbb{Z}_+$, $t>0$, $\mu>0$, $\gamma>0$.}
\For{$k=1,\cdots,K$}
\State{$x_1^{k+1}=\argmin\limits_{x} \left\{h_1(g_1(x^k_1)+\nabla g_1(x^k_1)^\top (x-x_1^k))+\frac{1}{\mu}\langle x_1^k-z^k, x-x_1^k\rangle+\frac{1}{2t}\|x-x_1^k\|^2\right\}$}
\State{$x_2^{k+1}=\argmin\limits_{x} \left\{h_2(g_2(x^k_2)+\nabla g_2(x^k_2)^\top (x-x_2^k))+\frac{1}{\mu}\langle x_2^k-z^k, x-x_2^k\rangle+\frac{1}{2t}\|x-x_2^k\|^2\right\}$}
\State{$z^{k+1} = z^k - \gamma\mu^{-1}(x_1^{k+1}-x_2^{k+1})$}
\EndFor
\textbf{end for}
\State{\textbf{return} $x_1^{\tau+1}$ with $\tau$ uniformly sampled from $\{1, \cdots, K\}$}
\end{algorithmic}
\end{algorithm}

Before analyzing the convergence of Algorithm \ref{alg:plm}, we need the following lemma and proposition.
\begin{lemma}[Lemma 3.2 in \cite{Drusvyatskiy2019}]\label{lemma:linear}
    Let  $f_i(z,y):=h_i\left(g_i(y)+\nabla g_i(y)(z-y)\right)$ for $i=1$ and $2$. For any $z,y\in \mathbb{R}^d$, 
    \begin{equation*}
        -\frac{\rho}{2}\|z-y\|^2\leq f_i(z)-f_i(z,y)\leq \frac{\rho}{2}\|z-y\|^2.
    \end{equation*}
\end{lemma}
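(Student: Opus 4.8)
The plan is to reduce the claim to two ingredients: the global Lipschitz continuity of $h_i$ and the standard quadratic bound on the first-order Taylor remainder of a $C^1$ map with $\beta$-Lipschitz Jacobian. Fix $i\in\{1,2\}$ and $z,y\in\R^d$. First I would write
\begin{equation*}
|f_i(z)-f_i(z,y)| = \left| h_i(g_i(z)) - h_i\big(g_i(y)+\nabla g_i(y)(z-y)\big)\right| \le L\,\big\|g_i(z) - g_i(y) - \nabla g_i(y)(z-y)\big\|,
\end{equation*}
using Assumption~\ref{asm:dc}, part~1, that each $h_i$ is $L$-Lipschitz. This already reduces everything to controlling the linearization error of $g_i$.

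Next I would establish the auxiliary estimate $\big\|g_i(z) - g_i(y) - \nabla g_i(y)(z-y)\big\| \le \tfrac{\beta}{2}\|z-y\|^2$. This is the vector-valued descent-lemma bound and follows from the fundamental theorem of calculus: writing $g_i(z)-g_i(y)=\int_0^1 \nabla g_i\big(y+t(z-y)\big)(z-y)\,dt$, subtracting $\nabla g_i(y)(z-y)=\int_0^1 \nabla g_i(y)(z-y)\,dt$, taking norms inside the integral, and applying the $\beta$-Lipschitz continuity of the Jacobian map from Assumption~\ref{asm:dc}, part~2, gives
\begin{equation*}
\big\|g_i(z) - g_i(y) - \nabla g_i(y)(z-y)\big\| \le \int_0^1 \big\|\nabla g_i\big(y+t(z-y)\big) - \nabla g_i(y)\big\|_{\mathrm{op}}\,\|z-y\|\,dt \le \int_0^1 \beta t\,\|z-y\|^2\,dt = \frac{\beta}{2}\|z-y\|^2.
\end{equation*}
Combining this with the first display and recalling $\rho = L\beta$ (Lemma~\ref{lemma:weak_con}) yields $|f_i(z)-f_i(z,y)| \le \tfrac{L\beta}{2}\|z-y\|^2 = \tfrac{\rho}{2}\|z-y\|^2$, which is exactly the two-sided bound claimed, since bounding the absolute value delivers the upper and lower inequalities simultaneously.

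I do not anticipate a real obstacle here; the only point requiring care is the justification of the integral representation of $g_i(z)-g_i(y)$, which is legitimate because $g_i$ is $C^1$, and the observation that the operator norm is exactly the quantity controlled by the $\beta$-Lipschitz Jacobian assumption, so that $\|(\nabla g_i(y+t(z-y))-\nabla g_i(y))(z-y)\| \le \|\nabla g_i(y+t(z-y))-\nabla g_i(y)\|_{\mathrm{op}}\|z-y\|$. Everything else is a direct chain of inequalities. (Alternatively, one can cite this as the standard estimate underlying Lemma~4.2 of \cite{Drusvyatskiy2019}, but the self-contained two-line argument above is preferable.)
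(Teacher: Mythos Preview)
Your proposal is correct and is precisely the standard argument: bound $|h_i(u)-h_i(v)|$ by $L\|u-v\|$ and then control the Taylor remainder of $g_i$ via the integral form and the $\beta$-Lipschitz Jacobian, yielding $\tfrac{L\beta}{2}\|z-y\|^2=\tfrac{\rho}{2}\|z-y\|^2$. The paper itself does not prove this lemma but simply cites \cite[Lemma~3.2]{Drusvyatskiy2019}, whose proof is exactly the two-step argument you wrote out.
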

\begin{proposition}\label{proposition}
Let $\mu^{-1}>\rho$ and $t^{-1}\geq \mu^{-1}+\rho$,
we have
    \begin{equation*}
    \|x_i^{k+1}-x_i^*(z^k)\|^2 \leq (1-tc)\|x_i^{k}-x_i^*(z^k)\|^2, \quad i=1,2,
    \end{equation*}
    where $c:=\mu^{-1}-\rho$.
\end{proposition}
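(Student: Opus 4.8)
The plan is to fix $i\in\{1,2\}$ and an iteration index $k$ (the argument is identical for both values of $i$), and abbreviate $y:=x_i^k$, $z:=z^k$, $x^+:=x_i^{k+1}$, and $x^*:=x_i^*(z^k)$. I introduce two objective functions. The first is the subproblem objective
\[
\phi(x):=h_i\bigl(g_i(y)+\nabla g_i(y)^\top(x-y)\bigr)+\tfrac1\mu\langle y-z,\,x-y\rangle+\tfrac1{2t}\|x-y\|^2 ,
\]
which is $t^{-1}$-strongly convex (the composition of the convex $h_i$ with an affine map is convex, the middle term is linear in $x$, and $\tfrac1{2t}\|x-y\|^2$ is $t^{-1}$-strongly convex) and is minimized at $x^+$ by construction. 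The second is the exact proximal objective
\[
\psi(x):=h_i(g_i(x))+\tfrac1{2\mu}\|x-z\|^2 ,
\]
which by Lemma~\ref{lemma:weak_con} is $c$-strongly convex with $c=\mu^{-1}-\rho>0$ and is minimized at $x^*$.

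The first step is the elementary identity, obtained by completing the square on the $z$-dependent terms,
\[
\phi(x)=\psi(x)+\bigl[f_i(x,y)-f_i(x)\bigr]+\Bigl(\tfrac1{2t}-\tfrac1{2\mu}\Bigr)\|x-y\|^2-\tfrac1{2\mu}\|y-z\|^2,
\]
where $f_i(x,y)$ and $f_i(x)$ are as in Lemma~\ref{lemma:linear} and $f_i(x,y)$ coincides with the linearized term appearing in $\phi$. Next, $t^{-1}$-strong convexity of $\phi$ at its minimizer gives $\phi(x^*)-\phi(x^+)\ge\tfrac1{2t}\|x^*-x^+\|^2$. Substituting the identity at $x=x^*$ and $x=x^+$, the common term $-\tfrac1{2\mu}\|y-z\|^2$ cancels, and the inequality reduces to a relation among $\psi(x^*)-\psi(x^+)$, the linearization gaps $f_i(x^*,y)-f_i(x^*)$ and $f_i(x^+,y)-f_i(x^+)$, and the squared distances $\|x^*-y\|^2$, $\|x^+-y\|^2$, $\|x^*-x^+\|^2$.

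I then bound the linearization gaps with Lemma~\ref{lemma:linear}: the upper bound $f_i(x^*,y)-f_i(x^*)\le\tfrac\rho2\|x^*-y\|^2$ at $x^*$ and the lower bound $f_i(x^+,y)-f_i(x^+)\ge-\tfrac\rho2\|x^+-y\|^2$ at $x^+$. After collecting terms, the coefficient multiplying $\|x^+-y\|^2$ becomes $\tfrac12(\rho-t^{-1}+\mu^{-1})$, which is $\le0$ exactly under the stepsize hypothesis $t^{-1}\ge\mu^{-1}+\rho$, so this term can be discarded; the coefficient of $\|x^*-y\|^2$ simplifies to $\tfrac12(t^{-1}-c)$. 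Finally, $c$-strong convexity of $\psi$ at $x^*$ gives $\psi(x^*)-\psi(x^+)\le-\tfrac c2\|x^*-x^+\|^2$; plugging this in yields $\tfrac12(t^{-1}-c)\|x^*-y\|^2\ge\tfrac12(t^{-1}+c)\|x^*-x^+\|^2\ge\tfrac1{2t}\|x^*-x^+\|^2$, and multiplying through by $2t$ gives $\|x^+-x^*\|^2\le(1-tc)\|y-x^*\|^2$, which is the claim (in fact with the slightly stronger factor $\tfrac{t^{-1}-c}{t^{-1}+c}$).

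The only genuine hazard is sign bookkeeping in the substitution step: one must pair the lower bound on the linearization gap with $x^+$ and the upper bound with $x^*$, and verify that the leftover $\|x^+-y\|^2$ coefficient is precisely where the condition $t^{-1}\ge\mu^{-1}+\rho$ is consumed (so that it is nonpositive and can be dropped). Everything else is routine completion of squares and two invocations of strong convexity at a minimizer.
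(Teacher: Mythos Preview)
Your argument is correct and complete: the identity relating $\phi$ and $\psi$ is right, the two strong-convexity inequalities are applied at the correct minimizers, Lemma~\ref{lemma:linear} is invoked with the correct signs, and the stepsize condition $t^{-1}\ge\mu^{-1}+\rho$ is consumed exactly where you say it is to kill the $\|x^+-y\|^2$ term. You even obtain the sharper contraction factor $\frac{1-tc}{1+tc}\le 1-tc$.

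The paper takes a different route, following the ``gradient mapping'' template of Nesterov. It defines $g_F=t^{-1}(x^k-x^{k+1})$ and, by chaining three inequalities (one from $t^{-1}$-strong convexity of the subproblem, one bounding $F^*(\bar x;t)$ below by $F(x_F)$ via Lemma~\ref{lemma:linear} and the stepsize condition, and one bounding $F(x)$ below by the linearized model plus a $c$-strongly convex term), arrives at a sufficient-decrease inequality of the form $F(x)\ge F(x_F)+\langle g_F,x-\bar x\rangle+\tfrac{t}{2}\|g_F\|^2+\tfrac{c}{2}\|x-\bar x\|^2$; the contraction then follows by expanding $\|x^{k+1}-x^*(z^k)\|^2=\|x^k-x^*(z^k)-tg_F\|^2$. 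Your approach bypasses the gradient mapping entirely and compares $\phi$ and $\psi$ directly through a single algebraic identity, which is arguably more transparent and, as you note, yields a slightly tighter constant. The paper's approach, on the other hand, produces the intermediate inequality~\eqref{eqn:F} in a form that is reusable for other purposes (e.g., deriving descent on $F$ itself), though that is not exploited here.
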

\begin{proof}
The proof is modified from the proof of \cite[Theorem 2.3.4]{nesterov2018}. For simplicity, we omit the subscript $i$ in $h_i$, $g_i$, $x_i^{k+1}$ and $x_i^*(z^k)$, and define the following notations without reflecting their dependency on $i$, $z^k$ and $\mu$:
    \begin{equation*}
    \begin{split}
        F(x) &:= h(g(x))+\frac{1}{2\mu}\|x-z^k\|^2,\\
        F(\Bar{x};x) &:= h(g(\Bar{x})+\nabla g(\Bar{x})(x-\Bar{x})) + \frac{1}{\mu}\langle \Bar{x}-z^k,x-\Bar{x}\rangle,\\
        F_t(\Bar{x};x)& := F(\Bar{x};x)+\frac{1}{2t}\|x-\Bar{x}\|^2,\\
        F^*(\Bar{x};t)&:= \min_x F_t(\Bar{x};x),\\
        x_F &:= \argmin_x F_t(\Bar{x};x),\\
        g_F &:= \frac{1}{t}(\Bar{x}-x_F).
    \end{split}
\end{equation*}
Then we have
\begin{equation}\label{eqn:middle}
    \begin{split}
       F(\Bar{x};x) &= F_t(\Bar{x};x)-\frac{1}{2t}\|x-\Bar{x}\|^2\\
       & \geq F_t(\Bar{x};x_F)+\frac{1}{2t}\|x-x_F\|^2-\frac{1}{2t}\|x-\Bar{x}\|^2\\
       & = F^*(\Bar{x};t)+\frac{1}{2t}\langle \Bar{x}-x_F,2x-x_F-\Bar{x} \rangle\\
       & = F^*(\Bar{x};t) + \langle g_F, x-\Bar{x} \rangle + \frac{t}{2}\| g_F\|^2,
    \end{split}
\end{equation}
where the first inequality is because $F_t(\Bar{x};x)$ is $\frac{1}{t}$-strongly convex. Moreover,
\begin{equation}
    \begin{split}
      F^*(\Bar{x};t) = & F(\Bar{x};x_F)+\frac{1}{2t}\|x_F-\Bar{x}\|^2\\
      =& h(g(\Bar{x})+\nabla g(\Bar{x})(x_F-\Bar{x})) + \frac{1}{\mu}\langle \Bar{x}-z^k,x_F-\Bar{x}\rangle+\frac{1}{2t}\|x_F-\Bar{x}\|^2\\
      \geq & h(g(x_F))-\frac{\rho}{2}\|x_F-\Bar{x}\|^2+\frac{1}{2t}\|x_F-\Bar{x}\|^2\\
      & + \frac{1}{2\mu}\|x_F-z^k\|^2-\frac{1}{2\mu}\|\Bar{x}-z^k\|^2-\frac{1}{2\mu}\|x_F-\Bar{x}\|^2\\\label{eqn:tail}
    \geq & F(x_F)-\frac{1}{2\mu}\|\Bar{x}-z^k\|^2,
    \end{split}
\end{equation}
where the last inequality comes from the Lemma \ref{lemma:linear} and the fact that $\frac{1}{\mu}\langle \Bar{x}-z^k,x_F-\Bar{x}\rangle = \frac{1}{2\mu}\|x_F-z^k\|^2-\frac{1}{2\mu}\|\Bar{x}-z^k\|^2-\frac{1}{2\mu}\|x_F-\Bar{x}\|^2$, and the second inequality is because $t^{-1}\geq \mu^{-1} + \rho$. 
We also have
\begin{equation}
\begin{split}
    F(x) =& h(g(x))+\frac{1}{2\mu}\|x-z^k\|^2\\
        \geq & h(g(\Bar{x})+\nabla g(\Bar{x})(x-\Bar{x}))-\frac{\rho}{2}\|x-\Bar{x}\|^2\\
        & +\frac{1}{2\mu}\|\Bar{x}-z^k\|^2+\frac{1}{\mu}\langle \Bar{x}-z^k,x-\Bar{x} \rangle + \frac{1}{2\mu}\|x-\Bar{x}\|^2\\\label{eqn:head}
        = & F(\Bar{x};x)+\frac{1}{2\mu}\|\Bar{x}-z^k\|^2+\frac{\mu^{-1}-\rho}{2}\|x-\Bar{x}\|^2,
\end{split}
\end{equation}
where the first inequality comes from the Lemma \ref{lemma:linear} and the fact that $\frac{1}{2\mu}\|x-z^k\|^2=\frac{1}{2\mu}\|\Bar{x}-z^k\|^2+\frac{1}{\mu}\langle \Bar{x}-z^k,x-\Bar{x} \rangle + \frac{1}{2\mu}\|x-\Bar{x}\|^2$. 
Combine inequalities (\ref{eqn:middle}), (\ref{eqn:tail}) and (\ref{eqn:head}), we can get
\begin{equation}\label{eqn:F}
    F(x)\geq F(x_F)+\langle g_F,x-\Bar{x} \rangle+\frac{t}{2}\|g_F\|^2 + \frac{\mu^{-1}-\rho}{2}\|x-\Bar{x}\|^2.
\end{equation}
Recall that
\begin{equation*}
    x^*(z^k) = \argmin_x F(x).
\end{equation*}
Let $x = x^*(z^k)$ and $\bar{x}=x^k$ in (\ref{eqn:F}) so $x_F=x^{k+1}$ and $g_F=(x^k-x^{k+1})/t$. Since $F(x_F)-F(x^*(z^k))\geq 0$, it is implied by \eqref{eqn:F} that
\begin{equation}\label{eqn:main}
    \langle g_F,x^k-x^*(z^k) \rangle \geq \frac{t}{2}\|g_F\|^2 + \frac{\mu^{-1}-\rho}{2}\|x^*(z^k)-x^k\|^2.
\end{equation}
The optimality condition of the subproblem $\min_x F_t(\Bar{x};x)$ reads
\begin{equation*}
    g_F\in \nabla g(x^k)^*\partial h(g(x^k)+\nabla g(x^k)(x^{k+1}-x^k))+\frac{1}{\mu}(x^k-z^k).
\end{equation*}
Then, in view of (\ref{eqn:main}), 
we have
\begin{equation}
    \begin{split}
        \|x^{k+1} - x^*(z^k)\|^2 =& \|x_k-x^*(z^k)-t g_F\|^2\\
        = & \|x^{k} - x^*(z^k)\|^2-2t\langle g_F,x^k-x^*(z^k)\rangle + t^2\|g_F\|^2\\
        \leq & (1-t(\mu^{-1}-\rho))\|x^{k} - x^*(z^k)\|^2\\
        \leq & (1-tc)\|x^{k} - x^*(z^k)\|^2
    \end{split}
\end{equation}

\end{proof}

The convergence property of Algorithm \ref{alg:plm} is presented as follows.
\begin{theorem}
    Suppose the assumptions hold and set $\mu^{-1}>\max\{1,\rho\}$, $t^{-1}\geq \mu^{-1}+\rho$, $\gamma=\min\left\{\frac{1}{4L_{\mu}},\sqrt{\frac{t^3c^4\mu^3}{48(1-t^2c^2)}}\right\} $ and $K=O(1/\epsilon^2)$ with $c:=\mu^{-1}-\rho$, then the output of Algorithm \ref{alg:plm} is a stochastic nearly 2$\epsilon$-critical point of (\ref{eqn:prob2}).
\end{theorem}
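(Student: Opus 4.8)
The plan is to treat Algorithm~\ref{alg:plm} as an inexact gradient descent on the smooth surrogate $f_\mu$ defined in \eqref{eqn:smooth_prob2}, where the inexactness in the gradient is controlled by Proposition~\ref{proposition}. Write $e^k := \mu^{-1}(x_1^{k+1}-x_2^{k+1}) - \nabla f_\mu(z^k) = \mu^{-1}\big((x_1^{k+1}-x_1^*(z^k)) - (x_2^{k+1}-x_2^*(z^k))\big)$ for the gradient error, so that $z^{k+1} = z^k - \gamma(\nabla f_\mu(z^k)+e^k)$. First I would invoke $L_\mu$-smoothness of $f_\mu$ to get the standard descent inequality
\begin{equation*}
f_\mu(z^{k+1}) \leq f_\mu(z^k) - \gamma\Big(1 - \tfrac{\gamma L_\mu}{2}\Big)\|\nabla f_\mu(z^k)\|^2 + \Big(\gamma + \gamma^2 L_\mu\Big)\|e^k\|\,\|\nabla f_\mu(z^k)\| + \tfrac{\gamma^2 L_\mu}{2}\|e^k\|^2,
\end{equation*}
and with $\gamma \le 1/(4L_\mu)$ absorb the cross term to obtain something like $f_\mu(z^{k+1}) \le f_\mu(z^k) - \tfrac{\gamma}{4}\|\nabla f_\mu(z^k)\|^2 + c_1\gamma\|e^k\|^2$ for an absolute constant $c_1$. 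Telescoping over $k=1,\dots,K$ and using that $f_\mu$ is bounded below (since $f_{2,\mu}\ge f_2 - $ const by Lemma~\ref{lemma:linear}-type bounds and $f_{1,\mu}\le f_1$) gives $\tfrac{1}{K}\sum_k \|\nabla f_\mu(z^k)\|^2 \le \tfrac{4\Delta_\mu}{\gamma K} + 4c_1 \cdot \tfrac1K\sum_k\|e^k\|^2$.

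The crux is bounding $\tfrac1K\sum_k\|e^k\|^2$, i.e.\ showing the tracking errors $E_i^k := \|x_i^k - x_i^*(z^{k-1})\|^2$ (more precisely $\|x_i^{k+1}-x_i^*(z^k)\|^2$) stay small on average. Proposition~\ref{proposition} gives contraction $\|x_i^{k+1}-x_i^*(z^k)\|^2 \le (1-tc)\|x_i^k - x_i^*(z^k)\|^2$, but between iterations the target moves: by Lemma~\ref{lemma_prox}, $\|x_i^*(z^k) - x_i^*(z^{k-1})\| \le (1-\mu\rho)^{-1}\|z^k - z^{k-1}\| = (1-\mu\rho)^{-1}\gamma\|\nabla f_\mu(z^{k-1})+e^{k-1}\|$. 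So I would set up a one-step recursion of the form $\|x_i^{k+1}-x_i^*(z^k)\| \le \sqrt{1-tc}\,\big(\|x_i^k - x_i^*(z^{k-1})\| + (1-\mu\rho)^{-1}\gamma\|\nabla f_\mu(z^{k-1})+e^{k-1}\|\big)$, using the triangle inequality to split $\|x_i^k-x_i^*(z^k)\|$. Squaring with a Young's inequality (weighting the contraction factor $\sqrt{1-tc}$ against $1$ so the recursion is a genuine contraction in $E_i^k$) produces $E_i^{k+1} \le (1-tc/2)E_i^k + c_2 \gamma^2\big(\|\nabla f_\mu(z^{k-1})\|^2 + \|e^{k-1}\|^2\big)$ for some constant $c_2$ depending on $t,c,\mu,\rho$. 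Summing the geometric series over $k$ then yields $\tfrac1K\sum_k E_i^k \le \tfrac{c_3}{tcK}E_i^1 + \tfrac{c_2\gamma^2}{(tc/2)}\cdot\tfrac1K\sum_k(\|\nabla f_\mu(z^k)\|^2 + \|e^k\|^2)$, and since $\|e^k\|^2 \lesssim \mu^{-2}(E_1^{k+1}+E_2^{k+1})$, the same bound holds for $\tfrac1K\sum_k\|e^k\|^2$.

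Now I would combine the two chains: plugging the error bound into the descent telescoped inequality, the term $\tfrac1K\sum_k\|\nabla f_\mu(z^k)\|^2$ appears on both sides with a coefficient that is $O(\gamma^2/(tc\mu^2))$ on the error side; the precise choice $\gamma \le \sqrt{t^3c^4\mu^3/(48(1-t^2c^2))}$ is exactly what makes this coefficient at most $1/2$ (or some fraction $<1$), so it can be absorbed to the left. This leaves $\tfrac1K\sum_k\|\nabla f_\mu(z^k)\|^2 = O\big(\tfrac{1}{\gamma K}\big) + O\big(\tfrac{1}{K}\big)$, hence $= O(1/\epsilon^2)$-many iterations suffice to make $\E\|\nabla f_\mu(z^\tau)\|^2 \le \epsilon^2$, and likewise $\E(E_1^{\tau+1}+E_2^{\tau+1}) \le c_4\epsilon^2$ so $\E\|e^\tau\| = O(\epsilon)$. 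Finally I would translate this into the nearly $2\epsilon$-critical conclusion: take $x = x_1^{\tau+1}$, and set $x' = x_1^*(z^\tau)$, $x'' = x_2^*(z^\tau)$, with $\xi_1 = \mu^{-1}(z^\tau - x_1^*(z^\tau)) \in \partial f_1(x_1^*(z^\tau))$ and $\xi_2 = \mu^{-1}(z^\tau - x_2^*(z^\tau)) \in \partial f_2(x_2^*(z^\tau))$ (these inclusions are the optimality conditions for the proximal subproblems \eqref{eq:proximal12}, noting $x_i^*(z)$ is the $\argmin$ so $\mu^{-1}(z - x_i^*(z)) \in \partial(h_i\circ g_i)(x_i^*(z))$ since $f_i$ is weakly convex and the subproblem strongly convex). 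Then $\|\xi_1 - \xi_2\| = \|\nabla f_\mu(z^\tau)\| \le \epsilon$ in expectation, $\|x - x'\| = \|x_1^{\tau+1}-x_1^*(z^\tau)\| = \sqrt{E_1^{\tau+1}} = O(\epsilon)$, and $\|x - x''\| \le \|x_1^{\tau+1}-x_1^*(z^\tau)\| + \|x_1^*(z^\tau)-x_2^*(z^\tau)\|$; here $\|x_1^*(z^\tau)-x_2^*(z^\tau)\| = \mu\|\nabla f_\mu(z^\tau)\| \le \mu\epsilon \le \epsilon$ since $\mu < 1$, giving $\|x-x''\| = O(\epsilon)$. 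Rescaling constants absorbed into $K=O(1/\epsilon^2)$, all three quantities are $\le 2\epsilon$ in expectation.

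\textbf{Main obstacle.} The delicate part is the coupled recursion between the objective decrease and the tracking error: both $\tfrac1K\sum\|\nabla f_\mu(z^k)\|^2$ and $\tfrac1K\sum\|e^k\|^2$ control and are controlled by each other, and closing the loop requires the step size $\gamma$ to be small enough (the $\sqrt{t^3c^4\mu^3/\cdots}$ bound) that the feedback coefficient is a contraction. Getting the geometric-sum bookkeeping right — in particular handling the moving target $x_i^*(z^k)$ via Lemma~\ref{lemma_prox} and making sure the error at step $k$ is expressed in terms of gradient norms at step $k-1$ without an off-by-one that breaks the telescoping — is where the real care is needed; the rest is routine descent-lemma manipulation.
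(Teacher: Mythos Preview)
Your proposal is correct and follows essentially the same approach as the paper: a descent lemma for the $L_\mu$-smooth surrogate $f_\mu$, combined with the contraction of Proposition~\ref{proposition} and the moving-target bound of Lemma~\ref{lemma_prox} to control the tracking error, with the feedback loop closed by the stated choice of $\gamma$. The paper packages the descent and tracking inequalities into a single weighted Lyapunov sum (adding a constant multiple of the tracking recursion to the descent inequality) rather than summing a geometric series separately, but the content is the same, and your final translation via the optimality conditions of \eqref{eq:proximal12} is identical to the paper's.
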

\begin{proof}
Recall that
\begin{equation*}
    \nabla f_{\mu}(z) = \mu^{-1}(x_1^*(z)-x_2^*(z))
\end{equation*}
and $\nabla f_{\mu}(z)$ is $L_{\mu}$-Lipschitz continuous, we have
\begin{equation*}
    \begin{split}
        &f_{\mu}(z^k) - f_{\mu}(z^{k+1})\\
        \geq & \langle -\nabla f_{\mu}(z^k),z^{k+1}-z^k \rangle -\frac{L_{\mu}}{2}\|z^{k+1}-z^k\|\\
         = & \langle \mu^{-1}(x_1^*(z^k)-x_2^*(z^k)), \gamma\mu^{-1}(x_1^{k+1}-x_2^{k+1}) \rangle - \frac{\gamma^2 L_{\mu}}{2\mu^2}\| x_1^{k+1}-x_2^{k+1} \|^2\\
         = & \frac{\gamma}{\mu^2}\langle x_1^*(z^k)-x_2^*(z^k), x_1^{k+1}-x_2^{k+1}-x_1^*(z^k)+x_2^*(z^k)+x_1^*(z^k)-x_2^*(z^k)\rangle\\
         & - \frac{\gamma^2 L_{\mu}}{2\mu^2}\| x_1^{k+1}-x_2^{k+1} -x_1^*(z^k)+x_2^*(z^k)+x_1^*(z^k)-x_2^*(z^k)\|^2\\
         \geq & \frac{\gamma}{\mu^2}\langle x_1^*(z^k)-x_2^*(z^k), x_1^{k+1}-x_2^{k+1}-x_1^*(z^k)+x_2^*(z^k)\rangle\\
         & +\left( \frac{\gamma}{\mu^2}- \frac{\gamma^2 L_{\mu}}{\mu^2}\right)\|x_1^*(z^k)-x_2^*(z^k)\|^2-\frac{\gamma^2 L_{\mu}}{\mu^2}\| x_1^{k+1}-x_2^{k+1} -x_1^*(z^k)+x_2^*(z^k)\|^2
    \end{split}
\end{equation*}
Applying Young's inequality to the first term on the right-hand side of the last inequality above gives
\begin{equation*}
    \begin{split}
        &f_{\mu}(z^k) - f_{\mu}(z^{k+1})\\
        \geq & -\frac{\gamma}{\mu^2}\left(\frac{1}{2}\|x_1^*(z^k)-x_2^*(z^k)\|^2+\frac{1}{2}\| x_1^{k+1}-x_2^{k+1}-x_1^*(z^k)+x_2^*(z^k) \|^2 \right)\\
        & +\left( \frac{\gamma}{\mu^2}- \frac{\gamma^2 L_{\mu}}{\mu^2}\right)\|x_1^*(z^k)-x_2^*(z^k)\|^2-\frac{\gamma^2 L_{\mu}}{\mu^2}\| x_1^{k+1}-x_2^{k+1} -x_1^*(z^k)+x_2^*(z^k)\|^2\\
        =& \left( \frac{\gamma}{2\mu^2}- \frac{\gamma^2 L_{\mu}}{\mu^2}\right)\|x_1^*(z^k)-x_2^*(z^k)\|^2-\left( \frac{\gamma}{2\mu^2}+ \frac{\gamma^2 L_{\mu}}{\mu^2}\right)\| x_1^{k+1}-x_2^{k+1} -x_1^*(z^k)+x_2^*(z^k)\|^2\\
        \geq & \frac{\gamma}{4\mu^2}\|x_1^*(z^k)-x_2^*(z^k)\|^2 - \frac{3\gamma}{4\mu^2}\| x_1^{k+1}-x_2^{k+1} -x_1^*(z^k)+x_2^*(z^k)\|^2
    \end{split}
\end{equation*}
where the last inequality comes from $\gamma \leq \frac{1}{4L_{\mu}}$. Rearrange terms, we can get
\begin{equation}\label{eqn:diff_prox}
    \begin{split}
        \frac{\gamma}{4\mu^2}\|x_1^*(z^k)-x_2^*(z^k)\|^2 \leq& \frac{3\gamma}{4\mu^2}\| x_1^{k+1}-x_2^{k+1} -x_1^*(z^k)+x_2^*(z^k)\|^2 + f_{\mu}(z^k) - f_{\mu}(z^{k+1})\\
        \leq & \frac{3\gamma}{2\mu^2}\left(\|x_1^{k+1} -x_1^*(z^k)\|^2 + \|x_2^{k+1} -x_2^*(z^k)\|^2\right)+ f_{\mu}(z^k) - f_{\mu}(z^{k+1})
    \end{split}
\end{equation}
Let $\theta:=tc$ and $\alpha$ be a positive constant whose value will be determined later. By Proposition \ref{proposition}, we have
\begin{equation*}
    \begin{split}
        \| x_1^{k+1}-x_1^*(z^k)\|^2 \leq &(1-tc)\| x_1^{k}-x_1^*(z^k)\|^2\\
        =&(1-tc)\| x_1^{k}-x_1^*(z^{k-1})+x_1^*(z^{k-1})-x_1^*(z^{k})\|^2\\
  (\text{By Young's inequality}) \quad     
  \leq & (1+\theta)(1-tc)\|x_1^{k}-x_1^*(z^{k-1})\|^2+\left(1+\frac{1}{\theta}\right)(1-tc)\|x_1^*(z^{k-1})-x_1^*(z^{k})\|^2\\
   (\theta=tc) \quad     \leq & (1-\theta^2)\|x_1^{k}-x_1^*(z^{k-1})\|^2+\left(1+\frac{1}{\theta}\right)(1-\theta)\sigma \|z^{k-1}-z^k\|^2\quad \\
   (\text{By Lemma \ref{lemma_prox} }\sigma=1/\mu c)  \quad   = & (1-\theta^2)\|x_1^{k}-x_1^*(z^{k-1})\|^2+\left(1+\frac{1}{\theta}\right)(1-\theta)\sigma\frac{\gamma^2}{\mu^2}\|x_1^{k}-x_2^{k}\|^2\\
        = &(1-\theta^2)\|x_1^{k}-x_1^*(z^{k-1})\|^2\\
        & +\left(1+\frac{1}{\theta}\right)(1-\theta)\sigma\frac{\gamma^2}{\mu^2}\|x_1^{k}-x_1^*(z^{k-1})-x_2^{k}+x_2^*(z^{k-1})+x_1^*(z^{k-1})-x_2^*(z^{k-1})\|^2\\
    (\text{By Young's inequality}) \quad 
    \leq & (1-\theta^2)\|x_1^{k}-x_1^*(z^{k-1})\|^2\\
        & +\frac{1-\theta^2}{\theta}\sigma\frac{\gamma^2}{\mu^2}(1+\alpha)\|x_1^{k}-x_1^*(z^{k-1})-x_2^{k}+x_2^*(z^{k-1})\|^2\\
        & +\frac{1-\theta^2}{\theta}\sigma\frac{\gamma^2}{\mu^2}(1+1/\alpha)\|x_1^*(z^{k-1})-x_2^*(z^{k-1})\|^2\\
   (\text{By Young's inequality}) \quad
   \leq & (1-\theta^2)\|x_1^{k}-x_1^*(z^{k-1})\|^2\\
        & +\frac{1-\theta^2}{\theta}\sigma\frac{\gamma^2}{\mu^2}2(1+\alpha)\left(\|x_1^{k}-x_1^*(z^{k-1})\|^2+\|x_2^{k}+x_2^*(z^{k-1})\|^2\right)\\
        & + \frac{1-\theta^2}{\theta}\sigma\frac{\gamma^2}{\mu^2}(1+1/\alpha)\|x_1^*(z^{k-1})-x_2^*(z^{k-1})\|^2.
    \end{split}
\end{equation*}
Suppose we can choose $\gamma$ such that 
$$
\gamma^2\leq\frac{\theta^3\mu^2}{6(1-\theta^2)\sigma(1+\alpha)}=\frac{\theta^3\mu^3c}{6(1-\theta^2)(1+\alpha)}.
$$
We will have 
\begin{equation*}
    \frac{\theta^2}{3}\geq \frac{1-\theta^2}{\theta}\sigma\frac{\gamma^2}{\mu^2}2(1+\alpha)
\end{equation*}
and
\begin{equation*}
    \frac{1-\theta^2}{\theta}\sigma\frac{\gamma^2}{\mu^2}(1+1/\alpha) \leq \frac{\theta^2(1+1/\alpha)}{6(1+\alpha)}.
\end{equation*}
Hence,
\begin{equation}
    \begin{split}
         \| x_1^{k+1}-x_1^*(z^k)\|^2 \leq &(1-\theta^2)\|x_1^{k}-x_1^*(z^{k-1})\|^2+\frac{\theta^2}{3}\|x_1^{k}-x_1^*(z^{k-1})\|^2+\frac{\theta^2}{3}\|x_2^{k}-x_2^*(z^{k-1})\|^2\\
         &+\frac{\theta^2(1+1/\alpha)}{6(1+\alpha)}\|x_1^*(z^{k-1})-x_2^*(z^{k-1})\|^2.
    \end{split}
\end{equation}
Similarly, we can get
\begin{equation}
    \begin{split}
         \| x_2^{k+1}-x_2^*(z^k)\|^2 \leq &(1-\theta^2)\|x_2^{k}-x_2^*(z^{k-1})\|^2+\frac{\theta^2}{3}\|x_1^{k}-x_1^*(z^{k-1})\|^2+\frac{\theta^2}{3}\|x_2^{k}-x_2^*(z^{k-1})\|^2\\
         &+\frac{\theta^2(1+1/\alpha)}{6(1+\alpha)}\|x_1^*(z^{k-1})-x_2^*(z^{k-1})\|^2.
    \end{split}
\end{equation}
Add them to get
\begin{equation}\label{eqn:point_prox}
    \begin{split}
        &\| x_1^{k+1}-x_1^*(z^k)\|^2+\| x_2^{k+1}-x_2^*(z^k)\|^2\\
       \leq & \left(1-\frac{\theta^2}{3}\right)\left( \| x_1^{k}-x_1^*(z^{k-1})\|^2+\| x_2^{k}-x_2^*(z^{k-1})\|^2 \right) +\frac{\theta^2(1+1/\alpha)}{3(1+\alpha)}\|x_1^*(z^{k-1})-x_2^*(z^{k-1})\|^2.
    \end{split}
\end{equation}
Let $\alpha = 7$, $\delta_{k}:=\| x_1^{k+1}-x_1^*(z^k)\|^2+\| x_2^{k+1}-x_2^*(z^k)\|^2$, $\Delta_k:=\|x_1^*(z^{k})-x_2^*(z^{k})\|^2$ and sum (\ref{eqn:diff_prox}) and $\frac{147\gamma}{32\mu^2\theta^2}\times$(\ref{eqn:point_prox}), we can get
\begin{equation*}
    \begin{split}
        \frac{\gamma}{\mu^2}\left(\frac{1}{4}\Delta_k-\frac{7}{32}\Delta_{k-1}+\frac{49}{32}\delta_{k-1}-\frac{3}{2}\delta_k\right)\leq \frac{147\gamma}{32\mu^2\theta^2}(\delta_{k-1}-\delta_k)+f_{\mu}(z^k)-f_{\mu}(z^{k+1}).
    \end{split}
\end{equation*}
Taking the sum over $k=1,\cdots,K$, dividing both sides by $K$ and then multiplying both sides by $\frac{32\mu^2}{\gamma}$, we can get
\begin{equation}
    \frac{1}{K}\sum^K_{k=1}(\Delta_k + \delta_k) \leq \frac{1}{K}\left(\frac{147}{\theta^2}\delta_{0}+7\Delta_0+49\Delta_K+\frac{32\mu^2}{\gamma}(f_{\mu}(z^1)-f_{\mu}(z^{K+1}))\right).
\end{equation}
Then we would like to bound $\Delta_K$. We have
\begin{equation*}
\begin{split}
    \Delta_K &= \|x_1^*(z^K)-x_2^*(z^K)\|^2\\
            & = \|x_1^*(z^K)-z^K+z^K-x_2^*(z^K)\|^2\\
            &\leq 2\|x_1^*(z^K)-z^K\|^2+2\|x_2^*(z^K)-z^K\|^2
\end{split}
\end{equation*}
We assume that there exists a constant $C_1$ such that $h_i(g_i(z^K))-\inf_x h_i(g_i(x))\leq C_1$ for any $x$ and $i=1,2$. Then by the definition of $x_i^*(z^K)$, we have for $i=1,2$,
\begin{equation*}
    h_i(g_i(x_i^*(z^K)))+\frac{1}{2\mu}\|x_i^*(z^K)-z^K\|^2\leq h_i(g_i(z^K)).
\end{equation*}
By rearrangement of the above inequality, we have
\begin{equation*}
\begin{split}
    \|x_i^*(z^K)-z^K\|^2&\leq 2\mu \left( h_i(g_i(z^K)) - h_i(g_i(x_i^*(z^K))) \right)\\
    & \leq 2\mu \left( h_i(g_i(z^K)) - \inf_xh_i(g_i(x)) \right) \leq 2\mu C_1.
\end{split}
\end{equation*}
Hence, we have $\Delta_K \leq 8\mu C_1$.

We also assume there is a constant $C$ such that $f_{\mu}(z^1)-f_{\mu}(z^{K+1})\leq C$, then by the definition of $x_{\tau}$, we have
\begin{equation}
    \E\left[ \Delta_{\tau}+\delta_{\tau} \right]\leq \frac{1}{K}\left( \frac{147\delta_0}{\theta^2}+7\Delta_0+392\mu C_1+\frac{32\mu^2 C}{\gamma} \right).
\end{equation}
By taking $K=\max\left\{ \frac{704\delta_0}{\theta^2} , 28\Delta_0, 1568\mu C_1, \frac{96\mu^2 C}{\gamma}\right\}\frac{1}{\mu^2 \epsilon^2}$, we have
\begin{equation*}
    \begin{split}
        \E[\|x_1^{\tau+1}-x_1^*(z^\tau)\|]\leq \mu\epsilon,\quad \E[\|x_2^{\tau+1}-x_2^*(z^\tau)\|]\leq \mu\epsilon,\quad  \E[\|x_1^*(z^\tau)-x_2^*(z^\tau)\|]\leq \mu\epsilon.
    \end{split}
\end{equation*}
By the definition of $x_1^*(z^{\tau})$ and $x_2^*(z^{\tau})$, we have
\begin{equation*}
    \begin{split}
        \frac{1}{\mu}(z^\tau - x_1^*(z^{\tau}))\in \partial f_1(x_1^*(z^{\tau})),\quad \frac{1}{\mu}(z^\tau - x_2^*(z^{\tau}))\in \partial f_2(x_2^*(z^{\tau})).
    \end{split}
\end{equation*}
Let $\xi_1:= \frac{1}{\mu}(z^\tau - x_1^*(z^{\tau}))$ and $\xi_2:= \frac{1}{\mu}(z^\tau - x_2^*(z^{\tau}))$, we have
\begin{equation*}
    \begin{split}
        \E[\|\xi_1-\xi_2\|]=\frac{1}{\mu}\E[\|x_2^*(z^{\tau})-x_1^*(z^{\tau})\|]\leq \epsilon,\quad \E[\|x_1^{\tau+1}-x_1^*(z^\tau)\|]\leq \mu\epsilon,\quad \E[\|x_2^{\tau+1}-x_2^*(z^\tau)\|]\leq \mu\epsilon.
    \end{split}
\end{equation*}
By setting $\mu \leq \min\{1, 1/\rho\}$, we have shown that $$\max\left\{\E[\|\xi_1-\xi_2\|], \E[\|x_1^{\tau+1}-x_1^*(z^\tau)\|], \E[\|x_2^{\tau+1}-x_2^*(z^\tau)\|]\right\}\leq \epsilon,$$ which means that $x_1^{\tau+1}$ is a stochastic nearly 2$\epsilon$-critical point of (\ref{eqn:prob2}).

\end{proof}




\bibliographystyle{plain}
\bibliography{composition}

\end{document}